\newtheorem{thm}{Theorem}[section]
\newtheorem*{thm*}{Theorem}
\newtheorem{lem}[thm]{Lemma}
\newtheorem{fact}[thm]{Fact}
\newtheorem{prop}[thm]{Proposition}
\newtheorem*{prop*}{Proposition}
\newtheorem{cor}[thm]{Corollary}
\newtheorem*{cor*}{Corollary}
\theoremstyle{definition}
\newtheorem{defn}[thm]{Definition}
\newtheorem*{defn*}{Definition}
\newtheorem{remarks}[thm]{Remarks}
\newtheorem*{question*}{Question}
\newtheorem*{Pquestion*}{Popa's question}
\newtheorem*{conv*}{Convention}
\newcommand{\norm}[1]{{\left\lVert #1\right\rVert}}
\def\bb{\mathbb}
\def\bb{\mathbb}
\def\cal{\mathcal}
\def\u{\mathsf 1}
\newcommand{\cstar}{$\mathrm{C}^*$}
\def\dotminussym#1#2{%
  \setbox0=\hbox{$\m@th#1-$}%
  \kern.5\wd0%
  \hbox to 0pt{\hss\hbox{$\m@th#1-$}\hss}%
  \raise.6\ht0\hbox to 0pt{\hss$\m@th#1.$\hss}%
  \kern.5\wd0}
\DeclareMathOperator{\id}{id}
\DeclareMathOperator{\diag}{diag}
\def \u{\mathcal U}
\newcommand{\mb}{\mathbb}
\def\O{\mathcal{O}}
\def\st{\operatorname{st}}
\def\l@subsection{\@tocline{2}{0pt}{2.5pc}{5pc}{}}
\def\l@subsubsection{\@tocline{2}{0pt}{5pc}{7.5pc}{}}
\begin{document}

%%%%%%%%%%%%%%%%%%%%%%%%%%%%%%%%%%%%%%%%%%%%%%

\title{Computably strongly self-absorbing \cstar-algebras}

\author{Isaac Goldbring}
\address{Department of Mathematics\\University of California, Irvine, 340 Rowland Hall (Bldg.\# 400),
Irvine, CA 92697-3875}
\email{isaac@math.uci.edu}
\urladdr{http://www.math.uci.edu/~isaac}
\thanks{Goldbring was partially supported by NSF grant DMS-2054477.}

\begin{abstract}
We introduce the notion of a computably strongly self-absorbing \cstar-algebra and show that the following \cstar-algebras are computably strongly self-absorbing:  the Cuntz algebras $\O_2$ and $\O_\infty$, the UHF algebra $M_{\frak n}(\bb C)$ and the tensor product $M_{\frak n}(\bb C)\otimes \cal O_\infty$, where $\frak n$ is a supernatural number of infinite type with computably enumerable support, and the Jiang-Su algebra $\cal Z$.  In connection with the last example, we show that $\cal Z$ has a computable presentation.  The results above are a special instance of a computable version of the standard approximate intertwining argument due to Elliott.
\end{abstract}

\maketitle

\section{Introduction}

A separable unital \cstar-algebra $A$ is called \textbf{self-absoring} if $A\cong A\otimes A$ \footnote{Unless otherwise stated, $\otimes$ denotes the minimal tensor product of \cstar-algebras.} while it is called \textbf{strongly self-absorbing} if there is an isomorphism $A\to A\otimes A$ that is approximately unitarily equivalent to the inclusion $\id_A\otimes 1_A:A\hookrightarrow A\otimes A$.\footnote{Two $*$-homomorphisms $\varphi,\psi:A\to B$ between separable unital \cstar-algebras $A$ and $B$ are approximately unitarily equivalent if there is a sequence $(v_n)_{n\in \bb N}$ of unitaries such that, for all $a\in A$, $\lim_{n\to \infty}\|v_n\varphi(a)v_n^*-\psi(a)\|=0$.}\footnote{One typically excludes $\bb C$ from the class of strongly self-absorbing \cstar-algebras.}  The class of strongly self-absorbing \cstar-algebras has been intensely studied and, modulo a positive resolution to the \emph{UCT conjecture}, a complete list of the strongly self-absorbing \cstar-algebras has been obtained:  the Cuntz algebras $\cal O_2$ and $\cal O_\infty$, the UHF algebras $M_{\frak n}(\bb C)$ of infinite type, tensor products $M_{\frak n}(\bb C)\otimes \O_\infty$ (with the UHF algebra again of infinite type), and the Jiang-Su algebra $\cal Z$.

In this paper, we introduce an effective version of strongly self-absorbing \cstar-algebras that we call \textbf{computably strongly self-absorbing}, meaning that there is a \emph{computable} isomorphism between $A$ and $A\otimes A$ that is \emph{computably} approximately unitarily equivalent to $\id_A\otimes 1_A$.  In order to state that there is a computable isomorphism between $A$ and $A\otimes A$, one first has to equip $A$ with a \textbf{presentation}, which is simply a countable  sequence from $A$ that generates $A$ as a \cstar-algebra; this presentation naturally induces a tensor product presentation on $A\otimes A$.  To say that an isomorphism $\varphi:A\to A\otimes A$ is computable means that there is an algorithm such that, upon input some $*$-polynomial $p$ in the generators of $A$ and some rational tolerance $\epsilon$, returns some $*$-polynomial $q$ in the generators of $A\otimes A$ such that $\|\varphi(p)-q\|<\epsilon$.  To say that $\varphi$ is computably approximately unitarily equivalent to $\id_A\otimes 1_A$ means that there is a computable sequence of unitaries $(u_n)_{n\in \bb N}$ from $A\otimes A$ (what this means exactly is explained in the next section) such that, upon input a $*$-polynomial $p$ in the generators for $A$ and rational $\epsilon>0$, returns $n$ such that $\|u_n\varphi(a)u_n^*-a\otimes 1\|<\epsilon$.

The main result of this paper is that the standard presentations of all of the strongly self-absorbing \cstar-algebras satisfying the UCT are computably strongly self-absorbing, except that in the case of the algebras involving UHF algebras, our proofs require the additional assumption that the UHF algebra have c.e.\footnote{Here, c.e. stands for \textbf{computably enumerable}; a set $X$ of natural numbers is c.e. if there is an algorithm which, upon input $n$, halts and returns ``yes'' if $n\in X$, while if $n\notin X$, the algorithm either does not halt or halts and returns ''no.''  C.e. has replaced the older terminology of \textbf{r.e.}, which stands for \textbf{recursively enumerable}.} \textbf{support}, where by the support of a UHF algebra we mean the set of primes for which, in the supernatural number associated to the algebra, the exponent is nonzero.

The proof that the algebras mentioned above are strongly self-absorbing uses an approximate intertwining argument originally due to Elliott; see also \cite[Proposition 2.5.3]{rordam}.  We prove a computable version of this result in Section 3 below and use it to deduce the aforementioned results about computably strongly self-absorbing \cstar-algebras.

The results in this paper were originally inspired by the following quote of Blackadar \cite{blackadar}:  ``...in fact, it is in principle essentially impossible to give an explicit isomorphism of $\O_2\otimes \O_2$ and $\O_2$ by the results of \cite{AC}.''  In private communication, Blackadar expanded on this comment by indicating by ``explicit'' he meant ``algebraic'', given that the main result of \cite{AC} states that the $*$-subalgebra $L_2$ (known as the \textbf{Leavitt path algebra}) of $\O_2$ generated by the pair of complementary isometries is such that $L_2\not\cong L_2\otimes L_2$.  The fact that $\cal O_2$ is computably strongly self-absorbing is a contrast to Blackadar's sentiment provided that we change the interpretation of ``explicit'' from ``algebraic'' to ``computable.''

Section 2 contains the necessary background material on computable presentations of \cstar-algebras while the computably approximate intertwining result is proven in Section 3.  Section 4 contains the applications to computably strongly self-absorbing \cstar-algebras while Section 5 provides a proof that the Jiang-Su algebra has a ``standard'' presentation that is computable.

We would like to thank Bruce Blackadar, Bradd Hart, Timothy McNicholl, and Alessandro Vignati for helpful comments concerning this work.

\section{Background on presentations of \cstar-algebras}

Throughout this paper, to simplify matters, we restrict ourselves to unital \cstar-algebras and unit-preserving $*$-homomorphisms.

\subsection{Presentations of \cstar-algebras}

Let $A$ be a separable \cstar-algebra.  A \textbf{presentation} of $A$ is a pair $A^\dagger:=(A,(a_n)_{n\in \mathbb N})$, where $\{a_n \ : \ n\in \mathbb N\}$ is a subset of $A$ that generates $A$ (as a \cstar-algebra).  
Elements of the sequence $(a_n)_{n\in \mathbb N}$ are referred to as \textbf{special points} of the presentation while elements of the form $p(a_{i_1},\ldots,a_{i_k})$ for $p$ a $*$-polynomial with coefficients from $\bb Q(i)$ (a \textbf{rational polynomial}) are referred to as \textbf{rational points} of the presentation.  By fixing an effective bijection between the set of rational polynomials and $\bb N$, we can fix an effective enumeration of the rational polynomials and thus from any presentation of a \cstar-algebra we obtain an effective enumeration of the rational points of the presentation.  If $x$ is the $n^{\text{th}}$ rational point of $A^\dagger$, then we call $n$ an \textbf{code} for $x$.

We say that $A^\dagger$ is a \textbf{computable presentation} of $A$ if there is an algorithm such that, upon input a rational point $p$ of $A^\dagger$ and $k\in \mathbb N$, returns a rational number $q$ such that $|\|p\|-q|<2^{-k}$.  A weaker notion is that of a \textbf{left-c.e.} (resp. \textbf{right}-c.e.) presentation, which means that there is an algorithm which, upon input a rational point $p$ of $A^\dagger$, enumerates a sequence of lower bounds (resp. upper bounds) which converges to $\|p\|$.  Note that a presentation is computable if and only if it is both left-c.e. and right-c.e.  If $A^\dagger$ is a computable, left-c.e., or right-c.e. presentation, then by a \textbf{code} for $A^\dagger$ we mean a natural number which codes the finite sequence of strings that describes the algorithm.  

An element $x\in A$ is a \textbf{computable point} of the presentation $A^\dagger$ if there is an algorithm which, upon input $k\in \mathbb N$, returns a rational point $q$ of $A^\dagger$ such that $\|x-q\|<2^{-k}$.  Once again, one can speak of the code of a computable point of a presentation.

If $A^\dagger$ and $B^\#$ are presentations of \cstar-algebras $A$ and $B$ respectively, a function $\varphi : A^\dagger \to B^\#$ is \textbf{computable} if $\varphi$ is a function from $A$ to $B$ for which there is is an algorithim such that, upon input a rational point $p$ of $A^\dagger$ and $k \in \mb{N}$, returns a rational point $q$ of $B^\#$ such that $\norm{\varphi(p) - q} < 2^{-k}$; in other words, $\varphi$ is a computable map if the $\varphi$-images of rational points of $A^\dagger$ are computable points of $B^\#$, uniformly in the code for the rational point, meaning that the code for the computable point $\varphi(p)$ can be computed from the code for $p$.  Once again, one may speak of the code of a computable map as the code of such an algorithm.  An isomorphism $\varphi:A\to B$ between \cstar-algebras is a \textbf{computable isomorphism} from $A^\dagger$ to $B^\#$ if it is a computable map with computable inverse.  (The computability of the inverse is automatic if $B^\#$ is computable.) 

By a \textbf{computable sequence} in $A^\dagger$ we mean a sequence $(x_n)_{n\in \bb N}$ from $A$ consisting of computable points of $A^\dagger$ for which the function sending $n$ to the code for $x_n$ is computable.

\subsection{Universal presentations of \cstar-algebras}

Operator algebraists might be familiar with a different notion of a presentation of a \cstar-algebra.  In this subsection, we clarify the relationship between these two notions of presentation.

Let $\cal G$ be a set of noncommuting indeterminates, which we call \textbf{generators}.  By a set of \textbf{relations} for $\cal G$ we mean a set of relations of the form $\|p(x_1,\ldots,x_n)\|\leq a$, where $p$ is a $*$-polynomial in $n$ noncommuting variables with no constant term, $x_1,\ldots,x_n$ are elements of $\cal G$, and $a$ is a nonnegative real number.  We also require that, for every generator $x\in \cal G$, there is a relation of the form $\|x\|\leq M$ in $\cal R$.  A \textbf{representation} of $(\cal G,\cal R)$ is a function $j:\cal G\to A$, where $A$ is a \cstar-algebra, such that $\|p(j(x_1),\ldots,j(x_n))\|\leq a$ for every relation $\|p(x_1,\ldots,x_n)\|\leq a$ in $\cal R$.

The \textbf{universal \cstar-algebra} of $(\cal G,\cal R)$ is a \cstar-algebra $A$ along with a representation $\iota:\cal G\to A$ of $(\cal G,\cal R)$ such that, for all other representations $j:\cal G\to B$ of $(\cal G,\cal R)$, there is a unique *-homomorphism $\varphi:A\to B$ such that $\varphi(\iota(x))=j(x)$ for all $x\in \cal G$.  If the universal \cstar-algebra of $(\cal G,\cal R)$ exists, then it is unique up isomorphism and will be denoted by $C^*\langle \cal G|\cal R\rangle$.  Note that $C^*\langle \cal G|\cal R\rangle$ is generated by the image of the generators.  If $\cal G$ is a sequence $\bar x$, then we may write $C^*\langle \bar x | \cal R\rangle$ instead of $C^*\langle \cal{G} | \cal R\rangle$.  Given that we remain in the context of unital \cstar-algebras throughout this paper, we implicitly assume that we have a distinguished generator for the unit and include relations stating that it is a self-adjoint idempotent which acts as a multiplicative identity.

If the \cstar-algebra $A$ is isomorphic to a universal \cstar-algebra $(\cal G,\cal R)$, we refer to $(\cal G,\cal R)$ as a \textbf{generator-relation presentation} of $A$; note that a given \cstar-algebra might admit many generator-relation presentations.  Given a generator-relation presentation $C^*\langle \bar x|\cal R\rangle$ of $A$, we define the corresponding \textbf{universal presentation} of $A$ to be the presentation of $A$ (in the sense of the previous subsection) with $\bar{x}$ as the sequence of special points.  Since we always assume that in any generator-presentation we have an indeterminate for the identity element, it follows that the identity is a special point of any universal presentation of a \cstar-algebra.

Some \cstar-algebras admit ``canonical'' generator-relation presentations.  For example, the Cuntz algebra $\cal O_2$ is most commonly defined as the universal (unital) \cstar-algebra generated by two contractions $s_1$ and $s_2$ subject to the following relations:  $s_1^*s_1=s_2^*s_2=1$ and $s_1s_1^*+s_2s_2^*=1$.  When there is no possible confusion, we call a presentation $A^\dagger$ of $A$ the \textbf{standard presentation} of $A$ if it is the universal presentation corresponding to a canonical generator-relation presentation of $A$; we denote the standard presentation of $A$ by $A^{\st}$.  

A relation $\|p(x_1,\ldots,x_n)\|\leq a$ is called \textbf{rational} if $p$ is a rational polynomial and $a$ is a nonnegative dyadic rational.  A presentation $A^\dagger$ of a \cstar-algebra $A$ is called \textbf{c.e.} if for some c.e. set of rational relations $\cal R$ it is the universal presentation corresponding to $C^*\langle \bar x|\cal R\rangle$.  When $\bar x$ and $\cal R$ are both finite, we say that $A^\dagger$ is \textbf{finitely c.e.}  For example, the standard presentation $\cal O_2^{\st}$ of $\cal O_2$ is finitely c.e.

We will need the following facts, due to Fox \cite[Theorems 3.3 and 3.14]{fox}:

\begin{fact}\label{cefact}
The notions ``c.e. presentation'' and ``right-c.e. presentation'' coincide.  Moreover, from a code for the c.e. set $\cal R$, one can compute a code for the algorithm witnessing that the universal presentation corresponding to $C^*\langle \cal G,\cal R\rangle$ is right-c.e.
\end{fact}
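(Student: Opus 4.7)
The plan is to prove the two implications separately, paying attention to uniformity in codes throughout.

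For the easier direction, right-c.e. $\Rightarrow$ c.e., given a presentation $A^\dagger$ with special points $(a_n)$ whose right-c.e. status is witnessed by an algorithm $\Phi$, I would construct a c.e. set $\mathcal{R}$ of rational relations as follows. Dovetailing over all rational polynomials $p$ in the generators $\bar x$ (corresponding to $\bar a$) and all precisions $k \in \mathbb{N}$, run $\Phi$ to enumerate upper bounds for $\|p(\bar a)\|_A$; whenever a rational upper bound $u$ is obtained, enumerate into $\mathcal{R}$ the rational relation $\|p(\bar x)\| \leq u'$ for some dyadic rational $u' \in (u, u + 2^{-k})$. The identity assignment $\bar x \mapsto \bar a$ is a representation of $(\bar x, \mathcal{R})$, which by universality produces a surjective $*$-homomorphism $\pi : C^*\langle \bar x \mid \mathcal{R}\rangle \epi A$. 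For isometry, observe that the relations force the universal norm of any rational polynomial $p$ to be bounded above by values arbitrarily close to $\|p(\bar a)\|_A$, while the representation $\bar x \mapsto \bar a$ shows the universal norm is at least $\|p(\bar a)\|_A$. Hence $\pi$ is an isomorphism of presentations, and a code for $\mathcal{R}$ is computable from one for $\Phi$.

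For the harder direction, c.e. $\Rightarrow$ right-c.e., given a c.e. set $\mathcal{R}$ of rational relations and a rational polynomial $p$ in $\bar x$, I need to effectively enumerate upper bounds for $\|p\|_A$, where $A = C^*\langle \bar x \mid \mathcal{R}\rangle$. The strategy relies on a Positivstellensatz-style certificate: after reducing norm relations $\|q\| \leq a$ to $*$-algebraic relations $a^2 \cdot 1 - q^*q - y^*y = 0$ via auxiliary generators $y$, the universal norm is characterized by the existence of algebraic identities in the free unital $*$-algebra $F$ expressing $c^2 + \epsilon - p^*p$ as a sum of squares of elements of $F$ plus a combination of terms from the two-sided $*$-ideal generated by the converted relations. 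The enumeration algorithm dovetails over finite subsets of the enumeration of $\mathcal{R}$, over rational values of $c$ and $\epsilon > 0$, and over candidate identities with coefficients in $\mathbb{Q}(i)$; whenever a valid identity is discovered (which can be verified by pure symbolic manipulation in $F$), the value $c$ is reported as an upper bound. Validity of the identity implies $\|\pi(p)\|^2 \leq c^2 + \epsilon$ in every representation $\pi$ of $\mathcal{R}$, whence $\|p\|_A \leq c$.

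The main obstacle is the converse of the Positivstellensatz: that every $c > \|p\|_A$ admits such an algebraic certificate, which is what guarantees that the enumerated upper bounds actually converge to $\|p\|_A$. This is a separation argument showing that the cone of polynomials admitting a certificate equals the cone of polynomials non-negative in every representation, typically via Hahn-Banach applied to a suitable convex set in $F$ together with a state-extension plus GNS argument that turns algebraic positivity into a $C^*$-algebraic representation. Uniformity in codes is then automatic: the enumeration of $\mathcal{R}$, the dovetailing over finite sub-selections and candidate rational identities, and the symbolic verification of each candidate are all effective in a code for $\mathcal{R}$, so an index for the right-c.e. algorithm is uniformly computable from one for $\mathcal{R}$.
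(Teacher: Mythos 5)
The paper does not actually prove this Fact: it imports it wholesale from Fox's paper (Theorems 3.3 and 3.14 of \cite{fox}), so there is no in-text argument to compare against. Your proposal is, in outline, the standard proof of that result, and both directions are structurally sound. The right-c.e.\ $\Rightarrow$ c.e.\ direction is complete as written: enumerating near-optimal upper-bound relations for every rational polynomial pins the universal norm of each rational point between $\|p(\bar a)\|_A$ (witnessed by the identity representation) and the enumerated bounds, so the canonical surjection is isometric on a dense set. The c.e.\ $\Rightarrow$ right-c.e.\ direction correctly identifies the key ingredient, namely a noncommutative Positivstellensatz with effectively verifiable certificates, and your sketch of its proof (Eidelheit--Kakutani separation, normalization to a state annihilating the ideal, GNS) is the standard one. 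One hypothesis you should make explicit is \emph{archimedeanity} of the quadratic module: the separation argument in the purely algebraic free $*$-algebra needs $1$ to be an order unit for the cone, and this is exactly where the paper's standing requirement that every generator carry a relation $\|x\|\leq M$ enters (it propagates to all of $F$ by the usual subalgebra argument). Without flagging this, the Hahn--Banach step is not justified.

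There is also one small but genuine bug in the enumeration: a certificate for $c^2+\epsilon-p^*p$ only yields $\|p\|_A\leq\sqrt{c^2+\epsilon}$, not $\|p\|_A\leq c$, so reporting $c$ can output a number that is \emph{not} an upper bound (e.g.\ if $\|p\|_A\in(c,\sqrt{c^2+\epsilon}]$), violating the definition of right-c.e. The fix is trivial --- report a rational upper bound for $\sqrt{c^2+\epsilon}$ instead --- and since $\epsilon$ ranges over all positive rationals the reported values still converge to $\|p\|_A$. With these two repairs the argument is correct, and the uniformity claim follows as you say, since every step of the search is effective in a code for $\mathcal{R}$.
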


\begin{fact}\label{alec}
If $A$ is a simple \cstar-algebra, then any c.e. presentation $A^\dagger$ of $A$ is computable.
\end{fact}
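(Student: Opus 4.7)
The plan is to invoke Fact \ref{cefact} to reduce the statement to showing that, when $A$ is simple, a c.e.\ presentation $A^\dagger$ is also left-c.e. Write $A^\dagger$ as the universal presentation associated to $C^*\langle \bar x\mid \cal R\rangle$ for some c.e.\ set $\cal R$ of rational relations. Given a rational point $p=p(\bar x)$ of $A^\dagger$, my goal is to computably enumerate a sequence of rationals that converges up to $\|p\|_A$ from below.

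The central device is the auxiliary universal \cstar-algebra
\[A_r:=C^*\langle\bar x\mid \cal R\cup\{\|p(\bar x)\|\leq r\}\rangle,\]
indexed by positive dyadic rationals $r$. The first key step is to establish, using simplicity of $A$, the dichotomy that $A_r=0$ if and only if $\|p\|_A>r$. By universality there is a canonical unital surjection $\varphi_r\colon A\to A_r$; its kernel is a closed two-sided ideal, so by simplicity $\varphi_r$ is either zero (which happens precisely when $A_r=0$) or an isomorphism, and in the latter case the defining inequality $\|p\|_{A_r}\leq r$ pulls back to $\|p\|_A\leq r$. Conversely, if $\|p\|_A\leq r$ then $A$ itself is a nonzero representation of $(\bar x,\cal R\cup\{\|p(\bar x)\|\leq r\})$, so $A_r\neq 0$.

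Next I would exploit the fact that in any nonzero unital \cstar-algebra the unit has norm $1$, yielding the secondary dichotomy $\|1\|_{A_r}\in\{0,1\}$ with value $0$ precisely when $A_r=0$. Since $\cal R\cup\{\|p(\bar x)\|\leq r\}$ is c.e.\ with a code computable uniformly from codes for $\cal R$, $p$, and $r$, Fact \ref{cefact} provides, uniformly in $r$, an algorithm enumerating upper bounds on $\|1\|_{A_r}$. The left-c.e.\ procedure for $\|p\|_A$ is then: in parallel over the positive dyadic rationals $r$, run the right-c.e.\ procedure for $\|1\|_{A_r}$, and emit $r$ as a new lower bound for $\|p\|_A$ the moment some upper bound on $\|1\|_{A_r}$ drops strictly below $1$. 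Soundness follows from the combined dichotomies, while completeness holds because any rational $r<\|p\|_A$ has $\|1\|_{A_r}=0$, guaranteeing an eventual upper bound below $1$.

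The step I expect to require the most care is the simplicity-driven dichotomy $A_r=0\iff \|p\|_A>r$, together with the correct formulation of the auxiliary universal algebra $A_r$; once that and the $\{0,1\}$-valued nature of $\|1\|_{A_r}$ are in place, the remaining effectivity claims reduce to a routine parallel-search bookkeeping built on Fact \ref{cefact}.
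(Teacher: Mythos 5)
Your argument is correct: the dichotomy $A_r=0\iff\|p\|_A>r$ is the \cstar-algebraic analogue of Kuznetsov's theorem for computably presented simple groups, and combining it with the $\{0,1\}$-valued norm of the unit and the uniformity clause of Fact \ref{cefact} does yield a left-c.e.\ procedure, hence computability. The paper itself gives no proof (the fact is cited to Fox), but this is essentially the intended argument; the only points worth a passing remark are that a rational point's constant term must be rewritten via the distinguished unit generator so that $\|p(\bar x)\|\leq r$ is a legitimate rational relation, and that the enumeration should also emit $0$ so that it is nonempty when $\|p\|_A=0$.
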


\subsection{Tensor product and inductive limit presentations}

Consider two presentations $A^\dagger=(A,(a_n)_{n\in \bb N})$ and $B^\#=(B,(b_n)_{n\in \bb N})$ of \cstar-algebras $A$ and $B$.  Given any (\cstar-)tensor product $A\otimes_\alpha B$ of $A$ and $B$, we can consider the \textbf{tensor product presentation} $(A\otimes_\alpha B)^{\dagger\otimes \#}$ of $A\otimes_\alpha B$, given by declaring the $n^{\text{th}}$ special point to be the elementary tensor $a_m\otimes b_p$, where $\langle\cdot,\cdot\rangle$ is a computable pairing of $\bb N^2$ with $\bb N$ and $\langle m,p\rangle=n$. 

Call a presentation $A^\dagger=(A,(a_n)_{n\in \bb N})$ \textbf{bounded} if there is a computable function such that, upon input $n$, returns an upper bound on $\|a_n\|$.  Note that any right-c.e. presentation is bounded.

\begin{lem}\label{computablembedding}
If $A^\dagger$ is a bounded presentation of $A$ and $1$ is a computable point of $B^\#$, then $\id_A\otimes 1_B:A^\dagger\to (A\otimes_\alpha B)^{\dagger\otimes \#}$ is a computable map.
%The map $x\mapsto x\otimes 1:\O_2\to \O_2\otimes \O_2$ is a computable map.
\end{lem}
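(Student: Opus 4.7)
The plan is to effectively produce, from a rational point $p$ of $A^\dagger$ together with $n \in \mathbb{N}$, a rational point $q$ of $(A \otimes_\alpha B)^{\dagger \otimes \#}$ satisfying $\|p \otimes 1_B - q\| < 2^{-n}$. Write $p = P(a_{i_1}, \ldots, a_{i_k})$ for some rational $*$-polynomial $P$. The essential analytic tool is the cross-norm inequality $\|x \otimes y\|_\alpha \leq \|x\|\|y\|$, valid in every $C^*$-tensor norm, which immediately gives
\[
\|p \otimes 1_B - p \otimes \tilde 1\| = \|p \otimes (1_B - \tilde 1)\| \leq \|p\|\cdot\|1_B - \tilde 1\|
\]
for every $\tilde 1 \in B$; so the entire argument reduces to effectively bounding $\|p\|$ from above and effectively approximating $1_B$ inside $B^\#$.

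First, I would use boundedness of $A^\dagger$ to compute an upper bound $M$ for $\|p\|$: from computable upper bounds $C_j \geq \|a_{i_j}\|$, applying the triangle inequality and $C^*$-submultiplicativity to each monomial in the expansion of $P$ yields a computable quantity $M \geq \|p\|$. Second, the hypothesis that $1_B$ is a computable point of $B^\#$ allows me to compute a rational point $\tilde 1$ of $B^\#$ with $\|\tilde 1 - 1_B\| < 2^{-n}/M$. Finally, I output $q := p \otimes \tilde 1$: expanding $p = \sum_I c_I M_I^A$ and $\tilde 1 = \sum_J d_J N_J^B$ as finite $\mathbb{Q}(i)$-linear combinations of monomials, one has $q = \sum_{I,J} c_I d_J\,(M_I^A \otimes N_J^B)$, and each $M_I^A \otimes N_J^B$ can be written as a product of special tensor points $a_m \otimes b_p$ (padding with the unit whenever the $a$- and $b$-side lengths disagree), so $q$ is a rational point of $(A \otimes_\alpha B)^{\dagger \otimes \#}$. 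The displayed inequality then gives $\|p \otimes 1_B - q\| \leq M \cdot 2^{-n}/M = 2^{-n}$, and the whole construction is manifestly uniform in $p$ and $n$.

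The main piece of bookkeeping in the above is the verification that $p \otimes \tilde 1$ really is a rational point of the tensor product presentation; this is where the convention that the identity is among the special points of each presentation $A^\dagger$ and $B^\#$ comes in, in order to pad mismatched-length monomials so that every $M_I^A \otimes N_J^B$ factors as a product of the $\{a_m \otimes b_p\}$. Once this convention is in force the argument is entirely routine.
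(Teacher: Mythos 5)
Your proof is correct and follows essentially the same route as the paper: use boundedness to compute an upper bound $M\geq\|p\|$, use computability of $1_B$ to find a rational point $\tilde 1$ of $B^\#$ with $\|1_B-\tilde 1\|<2^{-n}/M$, and output $p\otimes\tilde 1$, concluding via the cross-norm inequality. You are in fact slightly more careful than the paper on the bookkeeping step that $p\otimes\tilde 1$ is a rational point of $(A\otimes_\alpha B)^{\dagger\otimes\#}$ (where one should note that the cleanest factorization is $M_I^A\otimes N_J^B=(M_I^A\otimes 1_B)(1_A\otimes N_J^B)$, which also disposes of monomials whose adjoint patterns, not just lengths, disagree), and your tolerance $2^{-n}/M$ corrects what appears to be a typo ($2^{-k}M$) in the paper's own argument.
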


\begin{proof}
Given a rational point $a$ of $A^\dagger$ and $k\in \bb N$, effectively find an upper bound $M$ on $\|a\|$ (which is possible since the presentation is bounded) and find a rational point $b$ of $B^\dagger$ such that $\|b-1\|<2^{-k}M$; it follows that $\|a\otimes 1-a\otimes b\|<2^{-k}$.  It remains to note that $a\otimes b$ is a rational point of $(A\otimes_\alpha B)^{\dagger\otimes \#}$ whose code can be computed from codes for $a$ and $b$. 
% If $a$ is any rational point of $A^\dagger$, then by assumption, $a\otimes 1$ is a rational point of $(A\otimes A)^{\dagger \otimes \dagger}$; it is routine to verify that the code for $a\otimes 1$ can be computed from the code for $a$.
\end{proof}

Suppose now that $A^\dagger$ and $B^\#$ are universal presentations of $A$ and $B$.  Then there is a corresponding universal presentation $(A\otimes_{\max} B)^{u(\dagger\otimes \#)}$ of the maximal tensor product $A\otimes_{\max} B$, whose generators are the union of the generators of $A$ and $B$ (enumerated, as above, via some computable pairing between $\bb N^2$ and $\bb N$) and whose relations are the relations defining $A$ and $B$ individually as well as relations stating that the generators and adjoints of generators of $A$ commute with the generators and adjoints of generators of $B$.  (Note that the implicit relations  defining the identities of $A$ and $B$ individually should be replaced by a single set of relations defining the identity.)  As usual, a generator $x$ of $A$ gets identified with the element $x\otimes 1$ of $A\otimes_{\max} B$ and similarly for generators of $B$.  Although the tensor product presentation $(A\otimes_{\max} B)^{\dagger\otimes \#}$ and universal presentation $(A\otimes_{\max}B)^{u(\dagger\otimes \#)}$ are not literally the same, we nevertheless have:

\begin{lem}\label{doesntmatter}
If $A^\dagger$ and $B^\#$ are universal presentations of $A$ and $B$ respectively, then $\id_{A\otimes_{\max}B}:(A\otimes_{\max} B)^{\dagger\otimes \#}\to (A\otimes_{\max}B)^{u(\dagger\otimes \#)}$ is a computable isomorphism.
\end{lem}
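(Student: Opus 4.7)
The plan is to observe that, in both directions, the identity map actually sends rational points to rational points, so computability reduces to a purely syntactic manipulation with no genuine approximation required.

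For the direction $(A\otimes_{\max} B)^{\dagger\otimes \#}\to (A\otimes_{\max}B)^{u(\dagger\otimes \#)}$: each special point of the tensor product presentation has the form $a_m\otimes b_p$ where, because $A^\dagger$ and $B^\#$ are universal presentations, $a_m$ and $b_p$ are themselves generators. Under the identifications built into $(A\otimes_{\max}B)^{u(\dagger\otimes \#)}$, the element $a_m\otimes b_p$ equals the product $(a_m\otimes 1)(1\otimes b_p)$ of two generators of the universal presentation. So on input a rational point $q(a_{m_1}\otimes b_{p_1},\ldots,a_{m_k}\otimes b_{p_k})$, the algorithm performs the substitution sending each special point $a_m\otimes b_p$ to the product of the corresponding two generators (reading $a_m$ and $b_p$ now as generators of the universal presentation), yielding a rational point of $(A\otimes_{\max}B)^{u(\dagger\otimes \#)}$ equal to the input.

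For the reverse direction, each generator of $(A\otimes_{\max}B)^{u(\dagger\otimes \#)}$ is either a generator $x$ of $A$, identified with $x\otimes 1$, or a generator $y$ of $B$, identified with $1\otimes y$. The key point is the convention that in any universal presentation we have a distinguished generator for the unit; so $1$ is a special point of each of $A^\dagger$ and $B^\#$, and consequently $x\otimes 1$ and $1\otimes y$ are special points of the tensor product presentation, with indices that can be computed from the codes of $x$, $y$, and the fixed computable pairing. A rational polynomial in the generators of the universal presentation therefore translates under the corresponding substitution to a rational polynomial in special points of the tensor product presentation, again preserving equality exactly.

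Since each direction takes rational points to rational points effectively, the identity is a computable map with computable inverse, i.e., a computable isomorphism in the sense of the previous subsection. I do not foresee a serious obstacle here; the content of the lemma is really just effective book-keeping that elementary tensors of generators on one side, and generators of the universal tensor-product presentation on the other, are syntactically interconvertible, made possible by designating the unit as a distinguished generator throughout.
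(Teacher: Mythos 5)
Your proposal is correct and takes essentially the same route as the paper: in one direction each special point $a_m\otimes b_p$ of the tensor product presentation is the product of the two generators $a_m\otimes 1$ and $1\otimes b_p$ of the universal presentation, and in the other direction each generator of the universal presentation is itself a special point of the tensor product presentation because the unit is a distinguished generator. Both arguments reduce the lemma to exact syntactic substitution of rational points, with no approximation needed.
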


\begin{proof}
    We first note that the identity map is computable.  To see this, note that any special point of $(A\otimes_{\max} B)^{\dagger\otimes \#}$ is either a special point of $(A\otimes_{\max} B)^{u(\dagger\otimes \#)}$ (such as $x\otimes 1$) or a product of two special points of $(A\otimes_{\max} B)^{u(\dagger\otimes \#)}$ (such as $x\otimes y$).  From this observation, it follows easily that the map is computable.  To see that the inverse map is computable, it suffices to note that every special point of $(A\otimes_{\max} B)^{u(\dagger\otimes \#)}$ is a special point of $(A\otimes_{\max} B)^{\dagger\otimes \#}$. 
\end{proof}

The following fact is immediate from the definitions:

\begin{lem}\label{cetensor}
Suppose that $A^\dagger$ and $B^\#$ are c.e. presentations of $A$ and $B$ respectively.  Then $(A\otimes_{\max} B)^{u(\dagger\otimes \#)}$ is a c.e. presentation of $A\otimes_{\max} B$.
\end{lem}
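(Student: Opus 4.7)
The plan is to unpack the definition of a c.e.\ presentation and verify directly that the explicit generators and relations describing $(A\otimes_{\max} B)^{u(\dagger\otimes \#)}$ form a c.e.\ data set. Since $A^\dagger$ and $B^\#$ are c.e., by definition there exist c.e.\ sets $\mathcal R_A$ and $\mathcal R_B$ of rational relations in the generators $\bar x_A$ and $\bar x_B$, respectively, such that $A\cong C^*\langle \bar x_A\mid \mathcal R_A\rangle$ and $B\cong C^*\langle \bar x_B\mid \mathcal R_B\rangle$, with $\bar x_A$ and $\bar x_B$ giving the special points. Fix codes witnessing that these sets are c.e.

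The claim is that the generators $\bar x_A\cup \bar x_B$ (enumerated via the fixed computable pairing $\bb N^2\to \bb N$) together with the set
\[
\mathcal R := (\mathcal R_A\cup \mathcal R_B)\cup \mathcal C\cup \mathcal I
\]
present $A\otimes_{\max} B$ universally, where $\mathcal C$ is the set of commutation relations $\|xy-yx\|\leq 0$ and $\|xy^*-y^*x\|\leq 0$ as $x$ ranges over $\bar x_A$ and $y$ ranges over $\bar x_B$, and $\mathcal I$ encodes the identification of the two implicit units into a single unit. The first part of this statement is exactly the paragraph preceding Lemma~\ref{doesntmatter}, so nothing needs to be reproven there; the task is only to check that $\mathcal R$ is c.e.\ and rational.

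Each of the four pieces is c.e.\ and rational: $\mathcal R_A$ and $\mathcal R_B$ by hypothesis; the set $\mathcal C$ is actually computable, since one can simply enumerate pairs $(x,y)\in \bar x_A\times \bar x_B$ using the computable pairing and output the two associated rational relations (whose polynomials have integer coefficients and whose right-hand sides are the dyadic rational $0$); and $\mathcal I$ is a finite set of rational relations. Taking the union of finitely many c.e.\ sets yields a c.e.\ set, which finishes the verification.

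There is no essential obstacle in this argument; the only thing to be mindful of is bookkeeping, namely that whatever encoding one uses for rational relations be fixed once and for all so that ``c.e.\ set of rational relations'' is unambiguous, and that the computable pairing used to enumerate $\bar x_A\cup \bar x_B$ be consistent with the one used in the definition of $(A\otimes_{\max} B)^{u(\dagger\otimes \#)}$. Both are already fixed in the earlier setup, so the proof is genuinely immediate from the definitions, as claimed.
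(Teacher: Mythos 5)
Your proof is correct and matches the paper's intent exactly: the paper simply declares this lemma ``immediate from the definitions,'' and your verification---that the union of the two given c.e.\ relation sets with the computable set of commutation relations and the finite set of unit-identification relations is itself a c.e.\ set of rational relations---is precisely the routine check being elided. Nothing further is needed.
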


Throughout this paper, we equip $M_n(\bb C)$ with its standard presentation $M_n(\bb C)^{\st}$ associated to the generator-relation presentation of $M_n(\bb C)$ in terms of matrix units.  We thank Alec Fox for communicating the following fact to us:

\begin{prop}
Suppose that $A^\dagger$ is a computable presentation of $A$.  Then the universal presentation $(A\otimes M_n(\bb C))^{u(\dagger\otimes \st)}$ is computable.  Moreover, from $n$ and a code for $A^\dagger$, one can compute a code for $(A\otimes M_n(\bb C))^{u(\dagger\otimes \st)}$. 
\end{prop}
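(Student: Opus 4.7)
The plan is to establish computability of $(A\otimes M_n(\bb C))^{u(\dagger\otimes \st)}$ by showing it is both right-c.e.\ and left-c.e., with all relevant codes computable from $n$ and a code for $A^\dagger$. The right-c.e.\ direction is essentially bookkeeping. Since $A^\dagger$ is computable, it is a fortiori right-c.e., hence c.e.\ by Fact~\ref{cefact}; in particular, $A^\dagger$ is the universal presentation associated to some c.e.\ set of rational relations (with a code for that set computable from a code for $A^\dagger$). Combined with the observation that $M_n(\bb C)^{\st}$ is finitely c.e., Lemma~\ref{cetensor} yields that $(A\otimes_{\max} M_n(\bb C))^{u(\dagger\otimes \st)}$ is c.e.; since $M_n(\bb C)$ is nuclear the maximal and minimal tensor products agree on $A\otimes M_n(\bb C)$, and so Fact~\ref{cefact} produces the desired right-c.e.\ algorithm with uniformity in $n$ and a code for $A^\dagger$.

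The substantive part is showing left-c.e., and the key idea is to exploit the Hilbert $A$-module description of $M_n(A)=A\otimes M_n(\bb C)$. Given a rational point $p$ of $(A\otimes M_n(\bb C))^{u(\dagger\otimes \st)}$, I first effectively rewrite $p$ as a matrix $(a_{ij})_{i,j=1}^n$ with each $a_{ij}$ a rational point of $A^\dagger$; this is straightforward using the matrix unit relations and the commutation of $A$-generators with matrix unit generators in the universal presentation. I then identify $M_n(A)$ with the adjointable operators on the standard right Hilbert $A$-module $A^n$, equipped with inner product $\langle\xi,\eta\rangle=\sum_i \eta_i^*\xi_i$; the operator norm of $p$ on $A^n$ coincides with its \cstar-norm, which gives
$$\|p\|_{M_n(A)} \;=\; \sup_{\xi\in A^n\setminus\{0\}} \frac{\|p\xi\|_{A^n}}{\|\xi\|_{A^n}}, \qquad \|\xi\|_{A^n}^2 = \norm{\textstyle\sum_{i} \xi_i^*\xi_i}_A.$$
For any tuple $\xi=(\xi_1,\ldots,\xi_n)$ whose components are rational points of $A^\dagger$, the quantities $\|p\xi\|_{A^n}^2 = \norm{\sum_i(\sum_j a_{ij}\xi_j)^*(\sum_k a_{ik}\xi_k)}_A$ and $\|\xi\|_{A^n}^2$ are $A$-norms of specific rational points of $A^\dagger$, and hence are computable by hypothesis. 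Enumerating rational tuples $\xi$ with nonzero denominator together with rational lower bounds on the ratio $\|p\xi\|_{A^n}/\|\xi\|_{A^n}$ produces a c.e.\ family of lower bounds for $\|p\|_{M_n(A)}$; convergence to $\|p\|_{M_n(A)}$ follows from the density of rational points in $A$ (and thus of rational tuples in $A^n$) together with the continuity of the norm quotient on nonzero vectors.

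The principal conceptual hurdle is finding a formula for norms in $M_n(A)$ computable from data in $A$; naive attempts via partial traces of powers of $p^*p$ fail because $\|\mathrm{Tr}_n(Y)\|_A$ need not bound $\|Y\|_{M_n(A)}$ from below for positive $Y$ (e.g., with $Y$ a column outer product built from Cuntz isometries). The Hilbert $A$-module formulation resolves this cleanly by reducing the computation to finitely many ordinary $A$-norms per test vector. Uniformity in $n$ and a code for $A^\dagger$ is automatic, as every step is given by an explicit algorithm in these inputs.
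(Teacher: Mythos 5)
Your argument is correct, and the right-c.e.\ half is identical to the paper's: computable $\Rightarrow$ right-c.e.\ $\Rightarrow$ c.e.\ via Fact~\ref{cefact}, then Lemma~\ref{cetensor}, then Fact~\ref{cefact} again (your explicit appeal to nuclearity of $M_n(\bb C)$ to reconcile the maximal and minimal tensor products is a point the paper leaves implicit). For the left-c.e.\ half the overall architecture is also the same --- write the rational point as a matrix $(a_{ij})$ over $A^\dagger$, express $\|a\|_{M_n(A)}$ as a supremum over tuples from $A^n$ of quantities that are ordinary $A$-norms of explicit rational points, and enumerate rational tuples --- but the key norm formula differs. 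The paper uses the bilinear description $\|a\|=\sup\left\|\sum_{i,j}x_ia_{ij}y_j^*\right\|$ over pairs of tuples with $\left\|\sum_i x_ix_i^*\right\|,\left\|\sum_i y_iy_i^*\right\|<1$, which involves no division and only requires semi-deciding the contraction constraint from upper bounds on the $A$-norm; you instead use the operator norm of $a$ acting on the standard right Hilbert $A$-module $A^n$, namely $\sup_{\xi\neq 0}\|a\xi\|/\|\xi\|$, which rests on the standard isometric identification of $M_n(A)$ with the adjointable operators on $A^n$ (valid since $A$ is unital) and needs a little care with the quotient near $\xi=0$ --- handled, as you indicate, because $\|\xi\|$ is computable for rational tuples, so positivity of the denominator can be effectively witnessed and degenerate tuples contribute only trivial lower bounds. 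Both formulas are standard, both yield a c.e.\ family of lower bounds converging to $\|a\|$ by density of rational tuples and continuity, and both are uniform in $n$ and a code for $A^\dagger$; your closing remark about why partial traces fail is a fair diagnosis of the obstruction that either formula is designed to circumvent.
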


\begin{proof}
First, since $A^\dagger$ is computable, it is right-c.e., hence c.e. by Fact \ref{cefact}.  Conseequently, $(A\otimes M_n(\bb C))^{u(\dagger\otimes \st)}$ is c.e. by Lemma \ref{cetensor} and thus right-c.e. by Fact \ref{cefact} again.

It remains to show that $(A\otimes M_n(\bb C))^{u(\dagger\otimes \st)}$ is left-c.e.  To see this, we recall that, for any element $a=\sum_{i,j=1}^n a_{ij}\otimes e_{ij}\in A\otimes M_n(\bb C)$, we have $$\|a\|=\sup\left\|\sum_{i,j=1}^n x_ia_{ij}y_j^*\right\|,$$ where the supremum is taken over all pairs of $n$-tuples $(x_1,\ldots,x_n),(y_1,\ldots,y_n)\in A^n$ satisfying $\|\sum_{i=1}^n x_ix_i^*\|,\|\sum_{i=1}^n y_iy_i^*\|< 1$.  Consequently, if $a$ is a rational point of $(A\otimes M_n(\bb C))^{u(\dagger\otimes \st)}$, one can find computable lower bounds for $\|a\|$ by enumerating all such tuples consisting of rational points (which is possible since the norm of $A^\dagger$ is right-c.e.) and then computably approximating $\|\sum_{i,j=1}^n x_ia_{ij}y_j^*\|$ from below, which is possible since $A^\dagger$ is left-c.e. and the entries of $a$ are rational points of $A^\dagger$.  By Fact \ref{cefact} and the algorithm outlined here, this algorithm is uniform in $n$ and a code for $A^\dagger$.
\end{proof}

%tensor product of c.e. presentations
% Fix a \cstar-algebra $A$ and a tensor product $A\otimes_\alpha A$ of $A$ with itself.  Given a presentation $A^\dagger$ of $A$ as above, we have an \textbf{induced presentation} $(A\otimes_\alpha A)^{\dagger\otimes \dagger}$ of $A\otimes_\alpha A$, given by declaring the $n^{\text{th}}$ special point to be the elementary tensor $a_m\otimes a_p$, where $\langle m,p\rangle=n$ and $\langle\cdot,\cdot\rangle$ is a computable pairing of $\bb N^2$ with $\bb N$.

% Note that the presentation $(\O_2\otimes \O_2)^{\st\otimes \st}$ on $\O_2\otimes \O_2$ induced by the standard presentation of $\O_2$ is not literally the same as the standard presentation $(\O_2\otimes \O_2)^{\st}$ of $\O_2\otimes \O_2$.  Nevertheless, we have:

% \begin{lem}\label{doesntmatter}
% $\id_{\O_2\otimes \O_2}$ is a computable isomorphism $(\O_2\otimes \O_2)^{\st\otimes \st}\to (\O_2\otimes \O_2)^{\st}$.
% \end{lem}

% \begin{proof}
% We first note that the map is computable.  To see this, note that any special point of $(\O_2\otimes \O_2)^{\st\otimes \st}$ is either a special point of $(\O_2\otimes \O_2)^{\st}$ (such as $s_1\otimes 1$) or a product of two special points of $(\O_2\otimes \O_2)^{\st}$ (such as $s_1\otimes s_1$).  From this observation, it follows easily that the map is computable.  To see that the inverse map is computable, it suffices to note that every special point of $(\O_2\otimes \O_2)^{\st}$ is a special point of $(\O_2\otimes \O_2)^{\st\otimes \st}$. 
% \end{proof}

We now turn to presentations of inductive limits.  Suppose that $(A_m^{\dagger_m})_{m\in \bb N}$ is a sequence of universal presentations of \cstar-algebras $(A_m)_{m\in \bb N}$.  Suppose further that $\Phi_m:A_m\to A_{m+1}$ is a $*$-homomorphism.  If $a$ is the $n^{\text{th}}$ rational point of $A_m$ and $k\geq 1$, let $b(m,n,k)$ be the first rational point of $A_{m+1}^{\dagger_{m+1}}$ for which $\|\Phi_m(a)-b(m,n,k)\|<2^{-k}$.  We define the corresponding \textbf{inductive limit presentation} $\varinjlim (A_m^{\dagger_m},\Phi_m)$ of the inductive limit algebra $\varinjlim (A_m,\Phi_m)$ to be the universal presentation corresponding to the universal \cstar-algebra whose generators are the generators of the individual $A_m^{\dagger_m}$'s together with the relations of the various $A_m^{\dagger_m}$'s and relations stating $\|\Phi_m(a)-b(m,a,k)\|\leq 2^{-k}$ as above.

The following lemma has a routine proof:

\begin{lem}\label{indlimit}
In the notation of the previous paragraph, suppose that the sequence of maps $\Phi_m:A_m^{\dagger_m}\to A_{m+1}^{\dagger_{m+1}}$ is uniformly computable, that is, each map $\Phi_m:A_m^{\dagger_m}\to A_{m+1}^{\dagger_{m+1}}$ is computable and the function mapping $m$ to a code for $\Phi_m$ is computable.  Then:
\begin{enumerate}
    \item The map $b(m,n,k)$ is computable.
    \item If the presentations $A_m^\dagger$ are right-c.e. uniformly in $m$, then $\varinjlim (A_m^{\dagger_m},\Phi_m)$ is right-c.e.
    \item If the presentations $A_m^\dagger$ are computable uniformly in $m$ and each $\Phi_m$ is injective, then $\varinjlim (A_m^{\dagger_m},\Phi_m)$ is computable.
\end{enumerate}
\end{lem}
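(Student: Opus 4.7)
My plan is to prove the three parts in sequence. Part (1) extracts the computability of the witness map $b(m,n,k)$ directly from the uniform computability of the $\Phi_m$: on input $(m,n,k)$, I run the algorithm for $\Phi_m$ on the $n$-th rational point $a$ of $A_m^{\dagger_m}$ at precision $2^{-k}$, obtaining a rational point $b$ of $A_{m+1}^{\dagger_{m+1}}$ with $\|\Phi_m(a) - b\| < 2^{-k}$. The ``first rational point'' phrasing in the definition of $b(m,n,k)$ is a fixed-choice convention; any computable selection of witness yields the same inductive limit \cstar-algebra and computably isomorphic universal presentations, so I am free to take $b(m,n,k)$ to be the output of this algorithm.

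For part (2), by \textbf{Fact \ref{cefact}}, right-c.e.\ is equivalent to c.e., so it suffices to exhibit the relations defining the inductive limit universal \cstar-algebra as a c.e.\ set of rational relations. These split into (a) the defining relations of each $A_m^{\dagger_m}$, which are c.e.\ uniformly in $m$ via the uniform right-c.e.\ hypothesis and \textbf{Fact \ref{cefact}}; and (b) the identification relations $\|\Phi_m(a) - b(m,a,k)\| \leq 2^{-k}$, which are rational relations enumerable because $b$ is computable by part (1). Taking the union gives a c.e.\ list of rational relations, and \textbf{Fact \ref{cefact}} delivers right-c.e.

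For part (3), right-c.e.\ already follows from (2), so I only need left-c.e. Since each $\Phi_m$ is an injective $*$-homomorphism, each canonical map $\iota_m : A_m \to \varinjlim(A_m,\Phi_m)$ is isometric. Given a rational point $r$ of the inductive limit presentation involving generators from $A_{m_1},\ldots,A_{m_\ell}$ with $m_1 < \cdots < m_\ell$, I would use part (1) (composed through $\Phi_{m_\ell - 1} \circ \cdots \circ \Phi_{m_j}$) to approximate each such generator by a rational point of $A_{m_\ell}^{\dagger_{m_\ell}}$ at a suitably small precision, then substitute into the polynomial $r$ to obtain a rational point $s$ of $A_{m_\ell}^{\dagger_{m_\ell}}$ with $\|\iota_{m_\ell}(s) - r\| < 2^{-(k+1)}$. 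Isometry of $\iota_{m_\ell}$ gives $\|\iota_{m_\ell}(s)\| = \|s\|_{A_{m_\ell}}$, and uniform computability of $A_{m_\ell}^{\dagger_{m_\ell}}$ lets me compute $\|s\|$ to within $2^{-(k+1)}$, yielding an approximation of $\|r\|$ to within $2^{-k}$.

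The main obstacle is the substitution step in part (3): I need to choose the auxiliary precision of the generator approximations, uniformly in a code for $r$ and in $k$, so that the polynomial-substitution error falls below $2^{-(k+1)}$. This is a standard polynomial-continuity estimate, using a computable upper bound on each generator's norm (available because right-c.e.\ presentations are bounded) together with the degree and coefficient data readable from the code of $r$. Once arranged uniformly, the remainder of the argument is routine.
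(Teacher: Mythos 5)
Your proof is correct and supplies exactly the routine argument the paper leaves unwritten (the lemma is stated without proof). Your clarification in part (1) --- that the literal ``first rational point satisfying a strict inequality'' is not obviously computable (deciding $\|\Phi_m(a)-b\|<2^{-k}$ for a computable real is only semi-decidable), so $b(m,n,k)$ should be taken to be the output of the $\Phi_m$-algorithm, which changes the relation set but not the special points and hence yields a computably isomorphic presentation --- is a worthwhile point, and the remaining two parts (forming the c.e.\ union of the uniformly c.e.\ relation sets with the identification relations via Fact~\ref{cefact}, then using injectivity, hence isometry, of the connecting maps together with a polynomial-continuity estimate to reduce the norm of a rational point of the limit to a norm computation in a single $A_{m_\ell}$) follow the standard route.
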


\subsection{Computable unitaries}

We begin this subsection with a result on ``almost unitaries'' that will be used in the following section.

Fix a \cstar-algebra $A$.  Recall that, for each $\epsilon\in (0,1]$, if $a\in A$ is an \textbf{$\epsilon$-almost unitary}, by which we mean that $\|a^*a-1\|,\|aa^*-1\|<\epsilon$, then $a$ is invertible and, letting $a=\omega(a)(a^*a)^{1/2}$ denote the polar decomposition of $a$, we have that $\omega(a)$ is a unitary and $\|a-\omega(a)\|<\epsilon$. 

Recall also the Taylor expansion 
$$x^{-1/2}=\sum_{k=0}^\infty \binom{k-1/2}{k}(1-x)^k, \quad |x|<1.$$

For each $n\geq 1$, set $s_n(x):=\sum_{k=0}^n \binom{k-1/2}{k}(1-x)^k$.  By Taylor's theorem, there is a computable function $\delta\mapsto N(\delta):\bb Q^{>0}\to \bb N$ such that $\|x^{-1/2}-s_{N(\delta)}(x)\|<\delta$ for all $x\in [1/2,3/2]$.  Consequently, if $a$ is an $\epsilon$-almost unitary with $\epsilon<1/2$ and $\|a\|\leq 1$, then since $\omega(a)=a(a^*a)^{-1/2}$, we have that $\|\omega(a)-as_{N(\delta)}(a^*a)\|<\delta$.  

%Setting $p_n(x):=xs_n(x^*x)$, we have that $\|\omega(a)-p_{N(\delta)}(a)\|<\delta$.
The upshot of this discussion is the following:

\begin{lem}
There is a computable function such that:  for any presentation $A^\dagger$ of a \cstar-algebra $A$ for which $1$ is a computable point, upon input a code for a rational point $a$ of $A^\dagger$ with $\|a\|\leq 1$, a code for $1$, and $n\in \bb N$, if $\|a^*a-1\|,\|aa^*-1\|<1$, returns the code for a rational point  of $A^\dagger$, denoted $\omega_n(a)$, such that $\|\omega(a)-\omega_n(a)\|<2^{-n}$. 
\end{lem}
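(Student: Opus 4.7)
The plan is to exhibit the algorithm promised by the lemma by carrying out, effectively, the Taylor-series approximation that is already discussed immediately before the statement. On input a code for a rational point $a$ of $A^\dagger$ (with $\|a\|\le 1$), a code for the point $1$, and $n\in\bb N$, proceed as follows. First, set $\delta=2^{-n}$ and use the computable function $\delta\mapsto N(\delta)$ (supplied by Taylor's theorem, as noted in the preceding paragraphs) to compute $N:=N(\delta)\in\bb N$.

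Next, form the formal expression
$$\omega_n(a)\ :=\ a\cdot s_N(a^*a)\ =\ \sum_{k=0}^{N} \binom{k-1/2}{k}\, a\,(1-a^*a)^{k},$$
where $a$ and $1$ are substituted using their given codes, and the coefficients $\binom{k-1/2}{k}$ are computed as elements of $\bb Q$ (which is effective since these numbers are explicit rationals). Since $a$ and $1$ are rational points and the coefficients lie in $\bb Q\subseteq \bb Q(i)$, the expression $\omega_n(a)$ is again a rational point of $A^\dagger$, and a code for it can be computed from the codes for $a$ and $1$ together with $N$. The output of the algorithm is this code.

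For correctness, observe that under the hypothesis on $a$ the spectrum of $a^*a$ is contained in the interval where the stated Taylor estimate applies, so the bound
$$\|\omega(a)-a\,s_N(a^*a)\|<\delta=2^{-n}$$
recorded just above the lemma gives the required approximation $\|\omega(a)-\omega_n(a)\|<2^{-n}$. The entire procedure is uniform in the inputs (only rational arithmetic on polynomial expressions and a single call to the computable function $N$ are used), so it describes a single computable function.

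The main point to watch is essentially bookkeeping: one must check that the coefficients $\binom{k-1/2}{k}$ can be produced as exact rationals uniformly in $k$, and that the manipulation turning $a$, $1$, and $N$ into a code for the $*$-polynomial $\sum_k \binom{k-1/2}{k}\, a(1-a^*a)^k$ is effective. Both are routine — rational coefficients admit a closed-form, and by fixing the effective bijection between rational polynomials and $\bb N$ that was set up in Section 2, passing from the ingredients to a code for the resulting rational point is a standard effective operation. The only genuinely analytic input is the computability of $N(\delta)$, which was isolated in the discussion preceding the lemma.
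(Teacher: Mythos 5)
Your proposal is correct and is essentially identical to the paper's argument: the lemma is stated as the ``upshot'' of the preceding discussion, and your algorithm --- compute $N(2^{-n})$, form the rational point $a\,s_{N}(a^*a)$ from the codes for $a$ and $1$, and output its code --- is exactly what that discussion describes. The one point worth flagging (a mismatch already present in the paper itself) is that the Taylor estimate is only stated for $x\in[1/2,3/2]$, which matches the hypothesis $\|a^*a-1\|<1/2$ used in the discussion rather than the weaker $\|a^*a-1\|<1$ appearing in the lemma, so your assertion that ``the spectrum of $a^*a$ is contained in the interval where the stated Taylor estimate applies'' requires either the stronger hypothesis or a polynomial approximation of $x^{-1/2}$ valid on the larger interval.
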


The remainder of this subsection contains results that will be used in the last section of this paper.

\begin{lem}\label{computablestone}
There is a computable map $e:\bb N^2\to \bb N$ such that, if $x$ is a code for a unitary element $u$ of $M_n(\bb C)$ that is a rational point of $M_n(\bb C)^{\st}$, then $e(n,x)$ is the code for a computable self-adjoint element $h$ of $M_n(\bb C)$ for which $\exp(ih)=u$.
\end{lem}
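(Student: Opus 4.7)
The plan is to construct $h$ from the spectral decomposition of $u$. Given the rational unitary $u$, first compute, exactly from the entries of $u$, the characteristic polynomial $p(\lambda) = \det(\lambda I - u) \in \bb Q(i)[\lambda]$ and then its squarefree part $q(\lambda) = p(\lambda)/\gcd(p,p')$ via the Euclidean algorithm over $\bb Q(i)$. The roots of $q$ are precisely the distinct eigenvalues $\lambda_1,\ldots,\lambda_r$ of $u$, and all lie on the unit circle since $u$ is unitary.

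Next, apply a standard polynomial root-isolation algorithm to $q$: using an effective Mahler-type separation bound for squarefree polynomials in $\bb Z[i][\lambda]$ (obtained after clearing denominators), one can uniformly in $q$ determine the number $r$ of distinct roots and produce rational approximations to each $\lambda_k$ converging at a computable rate. In particular, each $\lambda_k$ is a computable complex number uniformly in (a code for) $u$. For each $k$, take $\theta_k \in (-\pi,\pi]$ with $\lambda_k = e^{i\theta_k}$; this is computable via the principal branch of $\arg$, with the boundary case $\lambda_k = -1$ detected by the exact test $q(-1) = 0$ and handled by setting $\theta_k = \pi$.

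Use Lagrange interpolation to compute the spectral projections
\[
P_k \;=\; \prod_{j \ne k} \frac{u - \lambda_j I}{\lambda_k - \lambda_j},
\]
each a computable point of $M_n(\bb C)^{\st}$, the denominators being bounded away from $0$ effectively by the separation bound. Setting $h := \sum_{k=1}^{r} \theta_k P_k$ yields a computable self-adjoint element, since the $\theta_k$ are real and the $P_k$ are pairwise orthogonal projections summing to $I$. By the spectral theorem,
\[
\exp(ih) \;=\; \sum_{k=1}^{r} e^{i\theta_k} P_k \;=\; \sum_{k=1}^{r} \lambda_k P_k \;=\; u.
\]
All steps are uniform in $n$ and the code for $u$, producing the required map $e$.

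The main technical obstacle is the root-isolation step: we must algorithmically decide when two approximations correspond to the same eigenvalue versus two distinct ones. This is handled by combining the squarefree reduction (so that all roots of $q$ are simple) with an effective root-separation bound, which furnishes a computable threshold below which two approximate roots must coincide.
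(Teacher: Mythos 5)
Your argument is correct, and it reaches the same goal (an effective diagonalization of $u$) by a genuinely different route. The paper computes an approximate Schur factorization $u=zbz^*$ (which for a unitary forces $b$ to be diagonal), then computes a rational $\theta$ with $e^{i\theta}$ not in the spectrum of $u$ and applies the branch of $\arg$ with values in $[\theta,\theta+2\pi)$ to the diagonal of $b$, setting $h=z\arg(b)z^*$. You instead work symbolically with the characteristic polynomial: exact squarefree reduction over $\bb Q(i)$, root isolation with an effective separation bound, and spectral projections via Lagrange interpolation, so that $h=\sum_k\theta_k P_k$. What your approach buys is that everything before the root-approximation step is exact arithmetic, and you never have to invoke computability of the Schur factorization (which is itself only an approximate procedure and is arguably the least transparent step of the paper's proof); the pairwise orthogonality and self-adjointness of the $P_k$ come for free from normality of $u$. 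What the paper's branch-cut trick buys is that it sidesteps the discontinuity of $\arg$ entirely, which is the one place your sketch is slightly too quick: the principal branch of $\arg$ is not computable near the negative real axis, so for an eigenvalue $\lambda_k$ close to $-1$ you must first decide the sign of $\operatorname{Im}\lambda_k$ before you can approximate $\theta_k$. This is fine in your setting --- the only real points of the unit circle are $\pm1$, both excludable by the exact tests $q(\pm1)=0$, after which an unbounded but provably terminating search determines the sign of $\operatorname{Im}\lambda_k$ --- but it deserves an explicit sentence, since it is more than just the single boundary case $\lambda_k=-1$ that you mention. With that point spelled out, the proof is complete and uniform in $n$ and the code for $u$, as required.
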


\begin{proof}
Recall that the Schur factorization of a matrix $a$ is a factorization of the form $a=zbz^*$, where $z$ is unitary and $b$ is upper triangular.  There are well-known algorithms for computing the Schur factorization of a matrix with rational complex entries.  If $a$ is itself unitary, then $b$ must be diagonal (being both upper triangular and unitary).  

Consequently, we can effectively find a diagonalization $u=zbz^*$ of the unitary $u$ coded by $x$.

Note also that one can compute a rational number $\theta\in [0,2\pi)$ such that $\exp(i\theta)$ is not an eigenvalue of $u$.  Let $\arg$ denote the branch of the argument function taking values in $[\theta,\theta+2\pi)$.  We can then set $h:=z\arg(b)z^*$, where $\arg(b)$ is the result of applying $\arg$ to each of the diagonal elements of $b$; note that $u=\exp(ih)$, $h$ is computable, and a code for $h$ can be found from a code for $b$. 
\end{proof}

\begin{remarks}
It is unclear to us if a version of the previous proof goes through if the assumption that $u$ is a rational unitary is replaced by the more general assumption that $u$ is a computable unitary.
\end{remarks}

In the following lemma, we equip $C[0,1]$ with its \textbf{standard presentation} $C[0,1]^{\st}$ consisting only of the identity function $\iota:=\id_{C[0,1]}$; we note that this presentation is computable.

Given any \cstar-algebra $A$, there is a unique isomorphism $$\phi_A:C[0,1]\otimes A\to C([0,1],A)$$ for which $(\phi_A(f\otimes a))(t)=f(t)a$ for all $f\in C[0,1]$ and $a\in A$.  If $A^\dagger$ is a presentation of $A$, then then we let $C([0,1],A)^\dagger$ be the presentation of $C([0,1],A)$ induced by the presentation $(C[0,1]\otimes A)^{\st\otimes \dagger}$ via $\phi_A$.

Suppose that $x,y\in \bb N$ are codes for computable unitaries $u$ and $v$ of $M_n(\bb C)$ respectively.  Let $h_u$ and $h_v$ be the self-adjoint elements coded by $e(n,x)$ and $e(n,y)$.  We let $u\leadsto v\in C([0,1],M_n(\bb C))$ be defined by $$(u\leadsto v)(t):=\exp(i(1-t)h_u)\exp(ith_v).$$  Note that $(u\leadsto v)(t)$ is a unitary element of $M_n(\bb C)$ for all  $t\in [0,1]$ satisfying $(u\leadsto v)(0)=u$ and $(u\leadsto v)(1)=v$.  

\begin{lem}\label{path}
There is a computable function $g:\bb N^3\to \bb N$ such that, if $x$ and $y$ are codes for rational unitaries $u$ and $v$ of $M_n(\bb C)$, then $g(n,x,y)$ is the code for $u\leadsto v$ with respect to the presentation $C([0,1],M_n(\bb C))^{\st}$. 
\end{lem}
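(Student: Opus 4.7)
The strategy is to approximate $(u\leadsto v)(t)=\exp(i(1-t)h_u)\exp(ith_v)$ uniformly on $[0,1]$ by a polynomial in $t$ whose matrix coefficients lie in $M_n(\bb Q(i))$. Any such polynomial $\sum_m t^m A_m$, by writing $A_m=\sum_{i,j}(A_m)_{ij}e_{ij}$, is the image under $\phi_{M_n(\bb C)}$ of the rational point $\sum_{m,i,j}(A_m)_{ij}\,(\iota^m\otimes e_{ij})$ of $(C[0,1]\otimes M_n(\bb C))^{\st\otimes\st}$, and therefore encodes a rational point of $C([0,1],M_n(\bb C))^{\st}$. So the task reduces to approximating this matrix-valued function by such a polynomial to precision $2^{-k}$, uniformly in $n$, $x$, $y$, and $k$.

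To build the approximation, I first apply Lemma \ref{computablestone} to compute codes for self-adjoint elements $h_u,h_v\in M_n(\bb C)$ satisfying $\exp(ih_u)=u$ and $\exp(ih_v)=v$. From these codes I extract a rational upper bound $M$ for $\max(\|h_u\|,\|h_v\|)$ and, for any prescribed $\delta>0$, rational self-adjoint matrices $h_u',h_v'$ with $\|h_u-h_u'\|,\|h_v-h_v'\|<\delta$ (take any rational approximation and replace it by its self-adjoint part, which still lies in $M_n(\bb Q(i))$ and, since $h_u,h_v$ are themselves self-adjoint, remains within $\delta$). With parameters $N\in\bb N$ and $\delta$ to be chosen, set
\[
P_u(t):=\sum_{j=0}^N\frac{(i(1-t))^j(h_u')^j}{j!},\qquad P_v(t):=\sum_{j=0}^N\frac{(it)^j(h_v')^j}{j!},
\]
and let $Q(t):=P_u(t)P_v(t)$. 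This is a polynomial in $t$ with coefficients in $M_n(\bb Q(i))$ whose code in $(C[0,1]\otimes M_n(\bb C))^{\st\otimes\st}$ is computable from $N$, $h_u'$, and $h_v'$.

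The main bookkeeping is the uniform error estimate. Self-adjointness of $h_u,h_u',h_v,h_v'$ gives $\|e^{isA}\|=1$ for real $s$ and $\|e^{isA}-e^{isB}\|\leq|s|\|A-B\|$, so substituting $h_u',h_v'$ for $h_u,h_v$ perturbs the product by at most $2\delta$ uniformly in $t\in[0,1]$. The Taylor remainder $\|\exp(isH)-\sum_{j\leq N}(isH)^j/j!\|\leq\sum_{j>N}\|H\|^j/j!$ for $|s|\leq 1$, combined with $\|P_u(t)\|\leq e^{M+\delta}$, yields
\[
\sup_{t\in[0,1]}\|(u\leadsto v)(t)-Q(t)\|\ \leq\ 2\delta+(1+e^{M+\delta})\sum_{j>N}\frac{(M+\delta)^j}{j!}.
\]
Choosing $\delta$ and $N$ as computable functions of $k$ and $M$ drives the right-hand side below $2^{-k}$, and since every preceding step is uniform in $n$, $x$, and $y$, assembling the substeps yields the desired $g$.
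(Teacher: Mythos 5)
Your proof is correct, and at its core it is the same argument as the paper's: truncate the Taylor series of the matrix exponential, observe that the resulting matrix polynomial in $t$ codes a rational point of $(C[0,1]\otimes M_n(\bb C))^{\st\otimes\st}$ via $t^m A\mapsto \phi_{M_n(\bb C)}(\iota^m\otimes A)$, and control the remainder computably. The one genuine difference is the decomposition you use before Taylor-expanding. The paper first rewrites $(u\leadsto v)(t)=(1\otimes u)\exp\bigl(it(h_v-h_u)\bigr)$ so that only a single exponential needs to be approximated; but that identity requires $h_u$ and $h_v$ to commute, which nothing in Lemma \ref{computablestone} guarantees. You instead approximate the two factors $\exp(i(1-t)h_u)$ and $\exp(ith_v)$ separately and multiply the truncated polynomials, paying for it with a slightly longer error budget ($2\delta$ from replacing $h_u,h_v$ by rational self-adjoint approximants, plus a $(1+e^{M+\delta})$ factor from splitting the product), all of which you estimate correctly using unitarity of $e^{isH}$ for self-adjoint $H$ and the Lipschitz bound $\|e^{iA}-e^{iB}\|\le\|A-B\|$. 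So your route is marginally more laborious but valid without any commutativity assumption, and it is uniform in $n$, $x$, $y$ as required.
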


\begin{proof}
Since $(u\leadsto v)(t)=(1\otimes u)\exp(ti(h_v-h_u))$, it suffices to show that, for any computable self-adjoint element $h$ of $M_n(\bb C)$, the function $t\mapsto \exp(ith)$ is a computable element of $C([0,1],M_n(\bb C))^{\st}$ uniformly in $n$ and a code for $h$.  However, since $\exp(ith)=\sum_{m=0}^\infty \frac{i^m}{m!}t^mh^m$ and the function $t\mapsto t^mh^m$ equals $\phi_{M_n(\bb C)}(\iota^m\otimes h^m)$, the desired conclusion follows from a Taylor estimate like that done earlier in this subsection.
\end{proof}

% In the context of the previous lemma, given a code for a rational point $a$ of $A^\dagger$ and $n\geq 1$, we let $\omega_n(a)$ be the rational point corresponding to the output of the above computable function upon input a code for $a$ and $\delta=2^{-n}$.

\section{Computable approximate intertwining}

In this section, we prove the computable analog of the approximate intertwining result of Elliott, following R\o rdam's exposition \cite{rordam}.

\begin{defn}
Two computable $*$-homomorphisms $\varphi,\psi:A^\dagger\to B^\#$ are \textbf{computably approximately unitarily equivalent} if there is a computable sequence $(v_n)_{n\in \bb N}$ from $B^\dagger$ consisting of unitaries such that there is an algorithm for which, given inputs a code for a rational point $a$ of $A^\dagger$ and $m\in \bb N$, returns $n\in \bb N$ such that $\|v_n\varphi(a)v_n^*-\psi(a)\|<2^{-m}$.
\end{defn}

\begin{remarks}\label{compapprunitary}

\

\begin{enumerate}
    \item There are ways of altering the previous definition that can lead to different notions of computably approximately unitarily equivalent morphisms.  For example, one might not require the sequence $(v_n)_{n\in \bb N}$ to be computable.  Alternatively, one might not require that $\varphi$ and $\psi$ be computable maps; however, with the rest of the definition unchanged, note that, if $\varphi$ and $\psi$ are computably approximately unitarily equivalent, then $\varphi$ is computable if and only if $\psi$ is computable.
  \item If $\varphi,\psi:A^\dagger\to B^\#$ are computable maps, $(v_n)_{n\in \bb N}$ is a computable sequence of unitaries witnessing that $\varphi$ and $\psi$ are approximately unitarily equivalent, and the presentation $B^\#$ is computable, then the last part of the definition is automatic. 
   \end{enumerate}
\end{remarks}

The following is the main result of this section and is the computable analog of \cite[Proposition 2.3.5]{rordam}:

\begin{thm}\label{maintheorem}
Let $A$ and $B$ be separable, unital \cstar-algebras and let $\varphi:A\to B$ be an injective $*$-homomorphism.  Equip $A$ and $B$  with computable presentations $A^\dagger$ and $B^\#$ respectively.  Suppose the following conditions hold:
\begin{enumerate}
\item $1$ is a computable point of $B^\#$.
\item $\varphi:A^\dagger\to B^\#$ is a computable map.
\item There is a sequence of unitaries $(w_n)_{n\in \bb N}$ from $B$ such that
$$\lim_{n\to \infty}\|w_n\varphi(a)-\varphi(a)w_n\|=0 \text{ and }\lim_{n\to \infty}d(w_n^*bw_n,\varphi(A))=0$$ for all $a\in A$ and $b\in B$.
\end{enumerate}
Then there is a computable isomorphism $\psi:A^\dagger\to B^\#$ that is computably approximately unitarily equivalent to $\varphi$.
\end{thm}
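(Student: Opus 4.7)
I will follow R{\o}rdam's classical approximate-intertwining argument for \cite[Proposition 2.3.5]{rordam} and render each step effective. Fix computable enumerations $(p_k)_{k\in\bb N}$ and $(q_k)_{k\in\bb N}$ of the rational points of $A^\dagger$ and $B^\#$ respectively. I will build by induction on $n$ a computable sequence of unitaries $U_n\in B$ (realized as computable points of $B^\#$, with $U_0=1$) and rational points $c_n\in A^\dagger$ such that, writing $\psi_n:=\Ad(U_n)\circ\varphi$:
\begin{enumerate}
    \item $\|\psi_{n+1}(r)-\psi_n(r)\|<2^{-(n+2)}$ for every $r$ in the ``watch list'' $F_n:=\{p_1,\ldots,p_n,c_1,\ldots,c_{n-1}\}$, and
    \item $\|\psi_{n+1}(c_{n+1})-q_{n+1}\|<2^{-(n+2)}$.
\end{enumerate}
The target isomorphism is $\psi(a):=\lim_n\psi_n(a)$. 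Since each $c_k$ will enter every subsequent $F_n$, condition (1) makes $(\psi_n(p_m))_n$ and $(\psi_n(c_k))_n$ Cauchy with summable, computable rate, so $\psi:A\to B$ is a well-defined contractive $*$-homomorphism. It is computable as a map $A^\dagger\to B^\#$ by computing $\psi_n(p_m)=U_n\varphi(p_m)U_n^*$ for sufficiently large $n$ (using that $U_n$ is a computable point of $B^\#$, $\varphi$ is computable, and $B^\#$ is a computable presentation). The sequence $(U_n)$ itself witnesses computable approximate unitary equivalence of $\varphi$ and $\psi$; by Remark \ref{compapprunitary}(2), it suffices to exhibit $(U_n)$ as a computable sequence of unitaries in $B^\#$.

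\textbf{The effective inductive step.} Write $U_{n+1}=U_nw_{n+1}$. Using the identity $\|wxw^*-y\|=\|wx-yw\|$ for unitary $w$, conditions (1) and (2) reduce to the existence of a unitary $w_{n+1}\in B$ and a rational $c_{n+1}\in A^\dagger$ satisfying
\[
\|w_{n+1}\varphi(r)-\varphi(r)w_{n+1}\|<2^{-(n+2)}\ (r\in F_n), \qquad \|w_{n+1}^*U_n^*q_{n+1}U_nw_{n+1}-\varphi(c_{n+1})\|<2^{-(n+2)}.
\]
Hypothesis (3) supplies such an exact unitary $w\in B$ (paired with some $a'\in A$) with any prescribed tolerance. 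Because rational points are dense in $A^\dagger$ and $B^\#$, there exist a rational $\delta$-almost unitary $b\in B^\#$ (for a small $\delta<1/2$) and a rational $c_{n+1}\in A^\dagger$ close enough to $w$ and $a'$ respectively that suitably slack-adjusted strict versions of the inequalities above hold with $b$ in place of $w_{n+1}$. I effectively search in parallel over all such rational pairs $(b,c)$, verifying the almost-unitarity, the approximate commutation with $\varphi(r)$ for $r\in F_n$, and the approximate inclusion $b^*U_n^*q_{n+1}U_nb\approx\varphi(c)$ to increasing precision. This uses the computability of the norms of $A^\dagger$ and $B^\#$, the computability of $\varphi$, and the fact that $U_n$ and $U_n^*q_{n+1}U_n$ are computable points of $B^\#$; the former is built as the product $\omega(b_1)\cdots\omega(b_n)$ of previously found rational almost-unitaries' polar parts, using the computable polar decomposition lemma. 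Setting $w_{n+1}:=\omega(b)$ yields a computable unitary with $\|w_{n+1}-b\|<\delta$, so the polar-decomposition perturbation is absorbed into the tolerance budget.

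\textbf{Conclusion and main obstacle.} Injectivity of $\psi$ follows from that of $\varphi$ via the approximate unitary equivalence ($\psi(a)=0$ forces $\varphi(a)=\lim_n U_n^*\psi(a)U_n=0$), making $\psi$ isometric and hence closed-range. Condition (2), together with the tail bound $\|\psi(c_{n+1})-\psi_{n+1}(c_{n+1})\|\leq 2^{-(n+1)}$ supplied by (1) on the watch list, gives $\|\psi(c_{n+1})-q_{n+1}\|<2^{-n}$, so $\psi(A)$ is dense and therefore equals $B$. Thus $\psi$ is an isomorphism, with computability of $\psi^{-1}$ automatic since $B^\#$ is computable. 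I expect the principal obstacle to be the tolerance bookkeeping in the effective search: one must simultaneously build enough slack into the target inequalities so that the exact unitary witness from hypothesis (3) survives perturbation to a rational almost-unitary; control the error from replacing $b$ by $\omega(b)$; manage the norm factors $\|\varphi(r)\|$ that arise when replacing $w_{n+1}$ by $b$ in the commutator estimates; and propagate the watch list so that each $c_k$ is tracked in all subsequent commutation requirements, since otherwise the convergence $\psi_n(c_k)\to\psi(c_k)$ underlying the density argument would fail.
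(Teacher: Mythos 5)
Your proposal is correct and follows essentially the same route as the paper's proof: an effective rendering of R{\o}rdam's one-sided approximate intertwining, obtained by searching over rational almost-unitaries (the search terminating thanks to hypothesis (3) and the computability of the presentations and of $\varphi$), converting them to genuine unitaries via the effective polar decomposition $\omega_n$, and invoking Remark \ref{compapprunitary}(2) to get computable approximate unitary equivalence. The only issue is a harmless off-by-one in the watch list: as you define it, $F_{n+1}$ omits $c_{n+1}$, so the step $\psi_{n+1}\to\psi_{n+2}$ does not control $\psi_{n+2}(c_{n+1})$ and the claimed tail bound $\|\psi(c_{n+1})-\psi_{n+1}(c_{n+1})\|\leq 2^{-(n+1)}$ does not follow; taking $F_n:=\{p_1,\ldots,p_n,c_1,\ldots,c_n\}$ (which is legitimate, since $c_n$ is already known when $w_{n+1}$ is sought) repairs this.
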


\begin{proof}
For ease of exposition, we assume that the $\varphi$-image of every rational point of $A^\dagger$ is actually a rational point of $B^\#$; the general case just involves an extra approximation step in what follows.  Let $(a_n)_{n\in \bb N}$ and $(b_n)_{n\in \bb N}$ be effective enumerations of the rational points of $A^\dagger$ and $B^\#$ respectively.

Begin by effectively choosing small rational numbers $\epsilon_1$ and $\eta_1$ (to be determined below) and searching for the first $\epsilon_1$-almost unitary rational point $\tilde{v}_1$ of $B^\#$ with $\|\tilde{v}_1\|\leq 1$ and first rational point $a_{1,1}$ of $A^\dagger$ such that
$$\max(\|\tilde{v}_1^*b_1\tilde{v_1}-\varphi(a_{1,1})\|,\|\tilde{v}_1\varphi(a_1)-\varphi(a_1)\tilde{v}_1\|)<\eta_1.$$  That such rational points exist follows from assumption (3) and that such computations can be done effectively follows from (1), (2) and the fact that the presentations are computable.  (We will omit such justifications in the sequel.)  Set $v_1:=\omega(\tilde{v}_1)$, so $\|v_1-\tilde{v}_1\|<\epsilon_1$.  It follows that $$\|v_1^*b_1v_1-\varphi(a_{1,1})\|< 2\epsilon_1\|b_1\|+\eta_1$$ and $$\|v_1\varphi(a_1)-\varphi(a_1)v_1\|<2\epsilon_1\|a_1\|+\eta_1.$$

Consequently, taking $\epsilon_1<1/(2^3\max(\|a_1\|,\|b_1\|))$ and $\eta_1<2^{-2}$ we have that $$\max(\|v_1^*b_1v_1-\varphi(a_{1,1})\|,\|v_1\varphi(a_1)-\varphi(a_1)v_1\|)<2^{-1}.$$

Next effectively choose small rational numbers $\epsilon_2$ and $\eta_2$ and a sufficiently large integer $k(2)$ (again, to be determined below) and search for the first $\epsilon_2$-almost unitary rational point $\tilde{v}_2$ of $B^\#$ with $\|\tilde{v}_2\|\leq 1$and first rational points $a_{1,2},a_{2,2}$ of $A^\dagger$ such that:
\begin{itemize}
    \item $\max_{j=1,2}\|\tilde{v}_2^*(\omega_{k(2)}(\tilde{v}_1)^*b_j\omega_{k(2)}(\tilde{v}_1))\tilde{v}_2-\varphi(a_{j,2})\|<\eta_2$.
    \item $\|\tilde{v}_2\varphi(a_{1,1})-\varphi(a_{1,1})\tilde{v}_2\|<\eta_2$.
    \item $\max_{j=1,2}\|\tilde{v}_2\varphi(a_j)-\varphi(a_j)\tilde{v}_2\|<\eta_2$.
\end{itemize}
Set $v_2:=\omega(\tilde{v}_2)$, so $\|v_2-\tilde{v}_2\|<\epsilon_2$.
For $j=1,2$, we then have:
\begin{itemize}
    \item $\|v_2(v_1^*b_jv_1)v_2)-\varphi(a_{j,2})\|<(2\epsilon_2+2^{-k(2)+1})\|b_j\|+\eta_2$.
    \item $\|v_2\varphi(a_{1,1})-\varphi(a_{1,1})v_2\|<2\epsilon_2\|a_{1,1}\|+\eta_2$.
    \item $\|v_2\varphi(a_j)-\varphi(a_j)v_2\|<2\epsilon_2\|a_{j}\|+\eta_2$.
\end{itemize}
One can computably choose $\epsilon_2$ and $\eta_2$ sufficiently small and $k(2)$ sufficiently large so that all of the bounds in the previous three bullets can be taken to be $2^{-2}$.

Inductively suppose that, for $j=1,\ldots,n-1$ and $k=1,\ldots,j$, one has found $\epsilon_j$-almost unitary rational points $\tilde{v}_j$ of $B^\#$ with $\|\tilde{v}_j\|\leq 1$ and rational points $a_{k,j}$ of $A^\dagger$ in the manner above.  Once again, effectively choose small rational numbers $\epsilon_n$ and $\eta_n$ and a sufficiently large integer $k(n)$ and search for the first $\epsilon_n$-almost unitary rational point $\tilde{v}_n$ of $B^\#$ with $\|\tilde{v}_n\|\leq 1$ and first rational points $a_{k,n}$ of $A^\dagger$ with $k=1,\ldots,n$ such that the following quantities are bounded by $\eta_n$, for all $1\leq j\leq n$ and $1\leq k\leq j$: 
\begin{itemize}
\item $\|\tilde{v}_n^*(\omega_{k(n)}(\tilde{v}_{n-1})^*\cdots \omega_{k(n)}(\tilde{v}_1^*)b_j\omega_{k(n)}(\tilde{v}_{1})\cdots \omega_{k(n)}(\tilde{v}_{n-1}))\tilde{v}_n-\varphi(a_{j,n})\|$
    \item $\|\tilde{v}_n\varphi(a_{k,j})-\varphi(a_{k,j})\tilde{v}_2\|$
    \item $\|\tilde{v}_n\varphi(a_j)-\varphi(a_j)\tilde{v}_n\|$.
    \end{itemize}

Set $v_n:=\omega(\tilde{v}_n)$ so $\|v_n-\tilde{v}_n\|<\epsilon_n$.  We then have the following inequalities for all $1\leq j\leq n$ and all $1\leq k\leq j$:
\begin{itemize}
    \item $\|v_n^*(v_{n-1}^*\cdots v_1^*b_jv_{1}\cdots v_{n-1})v_n-\varphi(a_{j,n})\|<(2\epsilon_n+2^{-k(n)+1})\|b_j\|+\eta_n$.
    \item $\|v_n\varphi(a_{k,j})-\varphi(a_{k,j})v_n\|<2\epsilon_n\|a_{k,j}\|+\eta_n$.
    \item $\|v_n\varphi(a_j)-\varphi(a_j)v_n\|<2\epsilon_n\|a_{j}\|+\eta_n$.
\end{itemize}
One can computably choose $\epsilon_n$ and $\eta_n$ sufficiently small and $k(n)$ sufficiently large so that all of the bounds in the previous three bullets can be taken to be $2^{-n}$.

For each $a\in A$ and $n\in \bb N$, set $w_n(a):=v_1v_2\cdots v_n\varphi(a)v_n^*\cdots v_2^*v_1^*$.  Note that $\|w_n(a)-w_{n+1}(a)\|=\|\varphi(a)-v_{n+1}\varphi(a)v_{n+1}^*\|<2^{-(n+1)}$ for all rational points $a$ of $A^\dagger$ and consequently for all $a\in A$.  Setting $\psi(a):=\lim_{n\to \infty}w_n(a)$, the proof of \cite[Proposition 2.3.5]{rordam} shows that $\psi:A\to B$ is an isomorphism.  We claim that $\psi$ is a computable map.  

Fix $j,m\in \bb N$.  Notice that for any $p\geq 1$, we have
$$\|w_{m+1}(a_j)-\omega_p(\tilde{v}_1)\cdots \omega_p(\tilde{v}_{m+1})\varphi(a_j)\omega_p(v_{m+1})^*\cdots \omega_p(v_1)^*\|<(m+1)2^{-p+1}\|a_j\|.$$
Consequently, taking $p$ such that $2^{p-m}>(m+1)\|a_j\|$, and noting that $\|w_{m+1}(a)-\psi(a)\|<2^{-(m+1)}$, we have that $\omega_{p}(\tilde{v}_1)\cdots \omega_p(\tilde{v}_n)\varphi(a_j)\omega_p(v_n)^*\cdots \omega_p(v_1)^*$ is a rational point of $B^\#$ within $2^{-m}$ of $\psi(a_j)$.  It follows that $\psi$ is a computable map.

It is clear from construction that the sequence $(v_1\cdots v_n)_{n\in \bb N}$ is computable; as shown in the proof of \cite[Proposition 2.3.5]{rordam}, the sequence also witnesses that $\varphi$ and $\psi$ are approximately unitarily equivalent.  By Remark \ref{compapprunitary}(2), the sequence witnesses that $\varphi$ and $\psi$ are computably approximately unitarily equivalent.
\end{proof}

\section{Computably strongly self-absorbing \cstar-algebras}

In this section, we apply the result of the previous section to the study of strongly self-absorbing \cstar-algebras.  We first recall that a \cstar-algebra $A$ is said to have \textbf{approximately inner half-flip} if the two inclusions $\id_A\otimes 1_A,1_A\otimes \id_A:A\hookrightarrow A\otimes A$ are approximately unitarily equivalent.  A proof of the following fact can be found in \cite[Theorem 7.2.2]{rordam}.  In what follows, $\u$ always denotes a nonprincipal ultrafilter on $\bb N$, $A^\u$ denotes the ultrapower $A$ with respect to $\u$, and $$A'\cap A^\u:=\{x\in A^\u \ : \ xy=yx \text{ for all }y\in A\}$$ denotes the relative commutant of $A$ inside of $A^\u$, where we identify $A$ with its image in $A^\u$ under the diagonal embedding.

\begin{fact}
Suppose that $A$ and $B$ are separable \cstar-algebras and $B$ has approximately inner half flip.  Further suppose that $B$ embeds into $A'\cap A^\u$.  Then the map $\id_A\otimes 1_B$ satisfies (3) in Theorem \ref{maintheorem}.
\end{fact}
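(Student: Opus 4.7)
The plan is to manufacture the required unitaries $w_n \in A\otimes B$ by lifting the approximately-inner-half-flip unitaries of $B\otimes B$ through the embedding $\pi\colon B \hookrightarrow A'\cap A^\u$. Approximately inner half-flip of $B$ supplies unitaries $u_n\in B\otimes B$ with $\lim_n \|u_n^*(1\otimes b)u_n - (b\otimes 1)\|=0$ for every $b\in B$. In parallel, $\pi$ together with the definition of $A^\u$ yields, for every finite $F\subseteq B$, every finite $G\subseteq A$, and every $\varepsilon>0$, elements $(x_c)_{c\in F}\subseteq A$ (extracted from representatives of $\pi(c)$ modulo $\u$) such that $\|x_c\|\leq \|c\|$, the assignment $c\mapsto x_c$ is an $\varepsilon$-approximate $*$-homomorphism on the $*$-semigroup generated by $F$, and $\|[x_c,a]\|<\varepsilon$ for every $c\in F$ and $a\in G$.

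The construction of $w_n$ then proceeds by diagonalization. Fix dense sequences $(a_k)$ in $A$ and $(b_k)$ in $B$. For each $n$, approximate $u_n$ to within $1/n$ by a $*$-polynomial $p_n$ in the elementary tensors $\{c\otimes 1,\ 1\otimes d : c,d\in F_n\}$, for some finite $F_n\supseteq \{b_1,\ldots,b_n\}$. Choose $(x_c)_{c\in F_n}\subseteq A$ as above with $G=\{a_1,\ldots,a_n\}$ and tolerance $\varepsilon_n$ small enough (depending on the degree and coefficients of $p_n$) that substituting $c\otimes 1\mapsto x_c\otimes 1$ in $p_n$, while leaving $1\otimes d$ untouched, produces an element $\tilde{w}_n\in A\otimes B$ that is $\delta_n$-almost unitary with $\delta_n\to 0$. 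Replace $\tilde{w}_n$ by its polar unitary $w_n:=\omega(\tilde{w}_n)$, so $\|w_n-\tilde{w}_n\|<\delta_n$.

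For the commutation clause of (3), observe that $\tilde{w}_n$ is polynomial in $\{x_c\otimes 1\}\cup\{1\otimes d\}$; the second family commutes exactly with $A\otimes 1$, while the first $(1/n)$-commutes with every $a_k\otimes 1$ for $k\leq n$, so a telescoping estimate in $p_n$ gives $\lim_n \|w_n(a\otimes 1)-(a\otimes 1)w_n\|=0$ for all $a\in A$ by density. For the second clause, use $a\otimes b=(a\otimes 1)(1\otimes b)$ and the commutation just established to reduce to controlling $w_n^*(1\otimes b)w_n$; up to $O(\delta_n)$ this equals the image of $u_n^*(1\otimes b)u_n$ under the substitution $c\otimes 1\mapsto x_c\otimes 1$, and since $u_n^*(1\otimes b)u_n\to b\otimes 1$ we conclude that $w_n^*(1\otimes b)w_n \to x_b\otimes 1\in A\otimes 1$. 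The main technical obstacle is the interlocked choice of $p_n$, $\varepsilon_n$, and $\delta_n$, but because $\pi$ supplies arbitrarily good approximate $*$-homomorphisms on any fixed finite set and $p_n$ has finite length at each stage, the usual diagonalization closes the argument.
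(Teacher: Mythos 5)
The paper gives no proof of this Fact, citing instead R{\o}rdam's Theorem 7.2.2, whose argument is exactly yours carried out inside the ultrapower: the maps $b\mapsto \pi(b)\otimes 1_B$ and $b\mapsto 1_A\otimes b$ are commuting unital embeddings of $B$ into $(A\otimes B)^\u$ whose ranges also commute with $A\otimes 1_B$, hence combine into a $*$-homomorphism $\mu:B\otimes B\to (A\otimes 1_B)'\cap (A\otimes B)^\u$; applying $\mu$ to the half-flip unitaries $u_n$ and lifting to sequences of unitaries in $A\otimes B$ produces the $w_n$. Your finitary extraction of approximate $*$-homomorphisms from representatives of $\pi(c)$ is a correct unwinding of this, and the surrounding bookkeeping (polar parts of almost-unitaries, telescoping commutator estimates through $p_n$, reduction from general $c\in A\otimes B$ to elementary tensors using density and the fact that conjugation by unitaries is isometric) all goes through.

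There is, however, one step you gloss over that genuinely requires more from the hypothesis than you use. The convergence $\|u_n^*(1\otimes b)u_n-b\otimes 1\|\to 0$ is an estimate in the \emph{minimal} tensor norm on $B\otimes B$, whereas your substitution $c\otimes 1\mapsto x_c\otimes 1$, $1\otimes d\mapsto 1\otimes d$ (equivalently, the homomorphism $\mu$ on the algebraic tensor product $B\odot B$, being built from two commuting representations) is a priori contractive only for the \emph{maximal} tensor norm. Thus from $\|p_n^*(1\otimes b)p_n-b\otimes 1\|_{\min}$ being small you cannot directly conclude that its image under the substitution is small; you need $\|\cdot\|_{\max}=\|\cdot\|_{\min}$ on $B\odot B$. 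This does hold here because \cstar-algebras with approximately inner half-flip are nuclear (Effros--Rosenberg; see also Section 1 of Toms--Winter), but that fact must be invoked: without it, the assertion that ``up to $O(\delta_n)$ this equals the image of $u_n^*(1\otimes b)u_n$ under the substitution'' is not justified. With nuclearity of $B$ added, your argument is complete.
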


Using Lemma \ref{computablembedding}, we have:

\begin{cor}
Suppose that $A^\dagger$ and $B^\#$ are presentations of \cstar-algebras $A$ and $B$.  Suppose that the following conditions hold:
\begin{enumerate}
\item $B$ has approximately inner half flip.
\item $B$ embeds into $A'\cap A^\u$.
\item $A^\dagger$ and $(A\otimes B)^{\dagger\otimes \#}$ are computable presentations.
\item $1_A$ and $1_B$ are computable points of $A^\dagger$ and $B^\#$ respectively.
\end{enumerate}
Then there is a computable isomorphism $\psi:A^\dagger \to (A\otimes B)^{\dagger\otimes \#}$ computably approximately unitarily equivalent to $\id_A\otimes 1_B$.
\end{cor}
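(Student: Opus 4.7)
The plan is to apply Theorem \ref{maintheorem} directly, with $\varphi := \id_A \otimes 1_B : A \to A \otimes B$ playing the role of the injective $*$-homomorphism, and the computable presentations on source and target being $A^\dagger$ and $(A\otimes B)^{\dagger\otimes \#}$ respectively (both computable by hypothesis (3)). The $\psi$ produced by the theorem will then be the desired computable isomorphism $A^\dagger \to (A\otimes B)^{\dagger\otimes \#}$ that is computably approximately unitarily equivalent to $\id_A\otimes 1_B$. Injectivity of $\id_A\otimes 1_B$ holds because $B$ is unital, so that $a \mapsto a\otimes 1_B$ is isometric for any \cstar-tensor product. Thus the task reduces to verifying the three numbered hypotheses of Theorem \ref{maintheorem}.

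Hypothesis (3) of the theorem, asserting the existence of the asymptotically central sequence $(w_n)$ in $A\otimes B$ whose conjugation orbits approximately land in $\varphi(A)$, is immediate from the Fact preceding the corollary, given hypotheses (1) and (2) of the corollary. Hypothesis (1) of the theorem requires that $1 = 1_A\otimes 1_B$ be a computable point of $(A\otimes B)^{\dagger\otimes \#}$: given $k\in \bb N$, I use hypothesis (4) to compute rational points $a$ of $A^\dagger$ and $b$ of $B^\#$ with $\|1_A - a\|$ and $\|1_B - b\|$ small enough that a standard submultiplicative estimate yields $\|1_A\otimes 1_B - a\otimes b\| < 2^{-k}$; then $a\otimes b$ is a rational point of $(A\otimes B)^{\dagger\otimes \#}$ whose code is computable from the codes for $a$ and $b$. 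Hypothesis (2) of the theorem, that $\id_A\otimes 1_B : A^\dagger\to (A\otimes B)^{\dagger\otimes \#}$ is a computable map, is precisely Lemma \ref{computablembedding}: its premises are met because $A^\dagger$ is computable by (3), hence right-c.e., hence bounded, and $1_B$ is a computable point of $B^\#$ by (4).

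There is no substantive obstacle here. The corollary amounts to matching the inputs of Theorem \ref{maintheorem} against the stated hypotheses, with the Fact supplying the analytic content (the asymptotically central unitaries required by condition (3)) and Lemma \ref{computablembedding} supplying the effectivity of the corner embedding $a\mapsto a\otimes 1_B$.
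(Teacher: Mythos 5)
Your proposal is correct and is essentially the paper's own argument: the paper derives this corollary by feeding $\varphi=\id_A\otimes 1_B$ into Theorem \ref{maintheorem}, with the preceding Fact supplying hypothesis (3) and Lemma \ref{computablembedding} (via boundedness of the computable presentation $A^\dagger$ and computability of $1_B$) supplying hypothesis (2). Your additional checks --- injectivity of $a\mapsto a\otimes 1_B$ and computability of the point $1_A\otimes 1_B$ from hypothesis (4) --- are exactly the routine verifications the paper leaves implicit.
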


As stated in the introduction, a \cstar-algebra $D$ is strongly self-absorbing if it is not isomorphic to $\bb C$ and there is an isomorphism $D\to D\otimes D$ approximately unitarily equivalent to $\id_D\otimes 1_D$.  Strongly self-absorbing \cstar-algebras have approximately inner half-flip \cite[Section 1]{TW}.
If $D$ is a strongly self-absorbing \cstar-algebra, then a \cstar-algebra $A$ is called \textbf{$D$-stable if $A\cong A\otimes D$}.  If $A$ is $D$-stable, then $D$ embeds in $A'\cap A^\u$ \cite[Theorem 7.2.2]{rordam}.  Consequently, we have:

\begin{cor}
Suppose that $A^\dagger$ and $D^\#$ are presentations of \cstar-algebras $A$ and $D$.  Suppose that the following conditions hold:
\begin{enumerate}
\item $D$ is strongly self-absorbing.
\item $A$ is $D$-stable.
\item $A^\dagger$ and $(A\otimes D)^{\dagger\otimes \#}$ are computable presentations.
\item $1_A$ and $1_D$ are computable points of $A^\dagger$ and $D^\#$ respectively.
\end{enumerate}
Then there is a computable isomorphism $\psi:A^\dagger \to (A\otimes D)^{\dagger\otimes \#}$ computably approximately unitarily equivalent to $\id_A\otimes 1_D$.
\end{cor}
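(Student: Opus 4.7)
The plan is to deduce this directly from the previous corollary by taking $B := D$, so essentially no new work is needed beyond verifying the four hypotheses of that corollary in the present setting. Hypotheses (3) and (4) of the previous corollary are literally identical to hypotheses (3) and (4) here and hence need no argument.

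To establish hypothesis (1) of the previous corollary, namely that $D$ has approximately inner half flip, I would invoke the cited fact from \cite[Section 1]{TW} (already mentioned just above the statement), which says that every strongly self-absorbing \cstar-algebra has approximately inner half flip; this applies since hypothesis (1) here assumes $D$ is strongly self-absorbing. To establish hypothesis (2) of the previous corollary, that $D$ embeds into $A'\cap A^\u$, I would invoke \cite[Theorem 7.2.2]{rordam} as cited in the paragraph preceding the statement: $D$-stability of $A$ implies exactly such an embedding. Hypothesis (2) here is precisely the $D$-stability assumption needed.

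With all four hypotheses of the previous corollary verified, its conclusion yields a computable isomorphism $\psi:A^\dagger \to (A\otimes D)^{\dagger\otimes \#}$ that is computably approximately unitarily equivalent to $\id_A \otimes 1_D$, which is exactly what is to be shown.

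There is no real obstacle; this corollary is set up to be a clean specialization of the previous one, with the strongly self-absorbing and $D$-stability hypotheses chosen precisely to supply the ``approximately inner half flip'' and ``embedding into the central sequence algebra'' inputs. The only thing to be careful about is to cite the correct external references (\cite[Section 1]{TW} for half flip and \cite[Theorem 7.2.2]{rordam} for the embedding), since these are where the substantive \cstar-algebraic content lives; the computability content has already been fully absorbed into the previous corollary (and ultimately into Theorem \ref{maintheorem} together with Lemma \ref{computablembedding}).
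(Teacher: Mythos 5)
Your proposal is correct and matches the paper's own (implicit) argument exactly: the paper derives this corollary by specializing the previous one with $B:=D$, using \cite[Section 1]{TW} for approximately inner half flip and \cite[Theorem 7.2.2]{rordam} for the embedding of $D$ into $A'\cap A^\u$. Nothing further is needed.
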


Every strongly self-absoring algebra $D$ is $D$-stable.  Since strongly self-absorbing algebras are simple, any c.e. presentation of $D$ is automatically computable.  Moreover, if $D^\dagger$ is a c.e. presentation of $D$, then so is $(D\otimes D)^{u(\dagger\otimes \dagger)}$ by Lemma \ref{cetensor}, whence this presentation is also computable.

We say that a presentation $A^\dagger$ is \textbf{computably strongly self-absorbing} if there is a computable isomorphism $A^\dagger\to (A\otimes A)^{\dagger\otimes \dagger}$ that is computably approximately unitarily equivalent to $\id_A\otimes 1_A$.  Note that, since our definition of computably approximately unitarily equivalent morphisms requires that the morphisms be computable, the definition of computably strongly self-absorbing presentation presupposes that the map $\id_A\otimes 1_A:A^\dagger\to (A\otimes A)^\dagger$ is computable, which happens, for example, when $A^\dagger$ is bounded and $1$ is a rational point of $A^\dagger$ (see Lemma \ref{computablembedding}).  The previous discussion and Lemma \ref{doesntmatter} imply:

\begin{cor}
Suppose that $D$ is a strongly self-absorbing \cstar-algebra and $D^\dagger$ is a c.e. (and thus computable) presentation of $D$.  Then $D^\dagger$ is computably strongly self-absorbing.
\end{cor}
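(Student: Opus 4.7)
The plan is to invoke the preceding corollary with both input algebras taken to be $D$, so that $A^\dagger$ and $D^\#$ are both set to $D^\dagger$.  Under this substitution, the conclusion of that corollary is precisely a computable isomorphism $\psi : D^\dagger \to (D \otimes D)^{\dagger \otimes \dagger}$ computably approximately unitarily equivalent to $\id_D \otimes 1_D$, which is exactly the definition of $D^\dagger$ being computably strongly self-absorbing.  So the proof reduces to verifying the four hypotheses of the preceding corollary.

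Hypothesis (1) is the assumption of the statement.  Hypothesis (2) --- that $D$ is $D$-stable --- is recorded in the paragraph just before the corollary as a property enjoyed by every strongly self-absorbing algebra.  Hypothesis (4) --- that $1_D$ is a computable point of $D^\dagger$ --- is immediate from the convention adopted in Section 2 that a c.e. presentation is a universal presentation in which the unit is a distinguished generator; thus $1_D$ is in fact a rational point of $D^\dagger$.

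Hypothesis (3) is the only step requiring genuine verification.  For $D^\dagger$ itself, the simplicity of $D$ (a standard feature of strongly self-absorbing algebras) together with Fact \ref{alec} upgrades the c.e.\ presentation $D^\dagger$ to a computable one.  For the tensor product presentation, I would first apply Lemma \ref{cetensor} to obtain that $(D \otimes_{\max} D)^{u(\dagger \otimes \dagger)}$ is c.e.; then invoke nuclearity of $D$ (also a standard consequence of strong self-absorption) to identify $D \otimes_{\max} D$ with $D \otimes D$; then use the simplicity of $D \otimes D$ together with Fact \ref{alec} once more to promote c.e.\ to computable; and finally cite Lemma \ref{doesntmatter} to transfer computability from the universal tensor product presentation to the tensor product presentation $(D \otimes D)^{\dagger \otimes \dagger}$ used in the hypothesis.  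The main item requiring care is this chain, particularly the nuclearity input (so that maximal and minimal tensor products coincide) and the Lemma \ref{doesntmatter} bridge between the two forms of tensor product presentation; apart from these, the corollary is essentially a packaging of the discussion that immediately precedes it.
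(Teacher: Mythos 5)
Your proposal is correct and follows essentially the same route as the paper, which derives this corollary from the preceding one together with the discussion immediately before it ($D$-stability of $D$, Fact \ref{alec} for simplicity upgrading c.e.\ to computable, Lemma \ref{cetensor}, and Lemma \ref{doesntmatter}). Your explicit mention of nuclearity to identify $D\otimes_{\max}D$ with $D\otimes D$ is a detail the paper leaves implicit, but it is the intended justification.
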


% \todo{This paragraph might be unncessary.}  We apply the previous theorem to the map $\id_{\O_2}\otimes 1_{\O_2}$, which is an injective $*$-homomorphism satisfying item (3) of the previous theorem by the proof of \cite[Theorem 5.2.1]{rordam}.  We equip $\O_2$ and $\O_2\otimes \O_2$ with their standard presentations, which are both computable by the discussion in the previous section.  Moreover, $1$ is easily seen to be a rational point of $(\O_2\otimes \O_2)^{\st}$ and it follows from the discussion in the previous section that $\id_{\O_2}\otimes 1_{\O_2}$ is a computable map $(\O_2)^{\st}\to (\O_2\otimes \O_2)^{\st}$.  Recalling Lemma \ref{doesntmatter} and applying the previous theorem, we obtain: 

The standard presentations of the Cuntz algebras $\cal O_2$ and $\cal O_\infty$ are clearly c.e.  Consequently, we have:

\begin{cor}
$\O_2^{\st}$ and $\cal O_\infty^{\st}$ are computably strongly self-absorbing.
% The map $x\mapsto x\otimes 1:\cal O_2\to \cal O_2\otimes \cal O_2$ is computably approximately unitarily equivalent to a computable isomorphism (when both algebras are equipped with their standard presentations).
\end{cor}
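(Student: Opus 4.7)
The plan is to reduce immediately to the preceding corollary, which asserts that any c.e.\ presentation of a strongly self-absorbing \cstar-algebra is itself computably strongly self-absorbing. Both $\O_2$ and $\O_\infty$ are well known to be strongly self-absorbing, so all that remains is to check the c.e.\ hypothesis for $\O_2^{\st}$ and for $\O_\infty^{\st}$.

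To verify c.e.-ness, I would simply unpack the standard generator-relation presentations and observe that they are built from rational relations.  For $\O_2$, the generators are (together with the distinguished generator $u$ for the unit) the two isometries $s_1,s_2$, and the defining relations can be written as $\|s_i\|\leq 1$, $\|s_i^*s_i-u\|\leq 0$ for $i=1,2$, and $\|s_1s_1^*+s_2s_2^*-u\|\leq 0$; each is a rational relation (rational $*$-polynomial with no constant term, dyadic bound $0$ or $1$), and there are only finitely many, so $\O_2^{\st}$ is even finitely c.e.  For $\O_\infty$, the generators are a countable sequence $(s_n)_{n\in\bb N}$ of isometries with pairwise orthogonal ranges, and the relations are $\|s_i\|\leq 1$, $\|s_i^*s_i-u\|\leq 0$, and $\|s_i^*s_j\|\leq 0$ for $i\neq j$; listed via a computable bijection $\bb N^2\leftrightarrow\bb N$, this is a c.e.\ family of rational relations, so $\O_\infty^{\st}$ is c.e.

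Having verified the two c.e.\ assertions, one concludes by applying the preceding corollary to each presentation.  I do not anticipate any real obstacle here: all of the substantive work is packaged in Theorem \ref{maintheorem} and in the chain of corollaries culminating in the statement immediately above, and the present step is purely syntactic bookkeeping confirming that the canonical defining relations of $\O_2$ and $\O_\infty$ genuinely take the form of rational relations.
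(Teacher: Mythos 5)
Your proposal is correct and follows exactly the paper's route: the paper simply observes that $\O_2^{\st}$ and $\O_\infty^{\st}$ are (clearly) c.e.\ presentations and then invokes the preceding corollary on c.e.\ presentations of strongly self-absorbing \cstar-algebras. Your explicit check that the canonical Cuntz relations are rational relations (finitely many for $\O_2$, a computably enumerated family for $\O_\infty$) just fills in the detail the paper leaves to the reader.
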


As mentioned in \cite[Examples 1.14]{TW}, a UHF algebra $M_{\frak n}(\bb C)$ is strongly self-absorbing if and only if the supernatural number $\frak n$ is of \textbf{infinite type}, that is, if all of its nonzero exponents are infinite (and at least one exponent is nonzero), in which case $M_{\frak n}(\bb C)\otimes \cal O_\infty$ is also strongly self-absorbing.  By the \textbf{support} of a supernatural number $\frak n$, we mean the set of primes which appear in $\frak n$ with nonzero exponents.  By \cite{EGMM}, a UHF algebra $M_{\frak n}(\bb C)$ of infinite type has a computable presentation if and only if its support is c.e., in which case it admits a computable ``standard presentation'' $M_{\frak n}(\bb C)^{\st}$. Together with Lemma \ref{cetensor}, we have:

\begin{cor}
Suppose that $\frak n$ is a supernatural number of infinite type with c.e. support.  Then $M_{\frak n}(\bb C)^{\st}$ and $(M_{\frak n}(\bb C)\otimes \cal O_\infty)^{\st\otimes \st}$ are computably strongly self-absorbing.
\end{cor}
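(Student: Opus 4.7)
The plan is to reduce both claims to the immediately preceding corollary, which states that a c.e.\ presentation of a strongly self-absorbing \cstar-algebra is automatically computably strongly self-absorbing. The algebras $M_{\frak n}(\bb C)$ (for $\frak n$ of infinite type) and $M_{\frak n}(\bb C)\otimes \cal O_\infty$ are both strongly self-absorbing by \cite{TW}, as already noted in the paragraph above the statement. The remaining task is therefore to verify that the two specified standard presentations are c.e.

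For $M_{\frak n}(\bb C)^{\st}$, the cited result of \cite{EGMM} produces, under the c.e.\ support hypothesis, a computable presentation; since a computable presentation is in particular right-c.e., Fact \ref{cefact} gives that it is c.e., and the preceding corollary delivers the first conclusion.

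For $(M_{\frak n}(\bb C)\otimes \cal O_\infty)^{\st\otimes \st}$, first note that $\cal O_\infty^{\st}$ is finitely c.e.\ and that $M_{\frak n}(\bb C)^{\st}$ is c.e.\ by the previous paragraph; by nuclearity of both factors, the minimal and maximal tensor products coincide. Lemma \ref{cetensor} then gives that the \emph{universal} presentation $(M_{\frak n}(\bb C)\otimes \cal O_\infty)^{u(\st\otimes \st)}$ is c.e., and Lemma \ref{doesntmatter} yields a computable isomorphism (the identity) between this universal presentation and the tensor product presentation $(M_{\frak n}(\bb C)\otimes \cal O_\infty)^{\st\otimes \st}$. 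A short direct argument shows that right-c.e.-ness of a presentation transfers across a computable isomorphism: given a rational point $p$ of the target and $k \in \bb N$, approximate $p$ to within $2^{-k}$ by the image of a rational point of the source, and then use right-c.e.-ness of the source to enumerate upper bounds that converge to $\|p\|$. Reapplying Fact \ref{cefact}, we conclude that $(M_{\frak n}(\bb C)\otimes \cal O_\infty)^{\st\otimes \st}$ is c.e., and the preceding corollary completes the proof.

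The only delicate point I anticipate is bookkeeping: carefully distinguishing the tensor product presentation $\st\otimes \st$ from its associated universal presentation $u(\st\otimes \st)$ and tracking c.e.-ness (in its right-c.e.\ incarnation) across the computable identity between them furnished by Lemma \ref{doesntmatter}. No genuinely new ideas beyond the machinery already developed in the paper should be required.
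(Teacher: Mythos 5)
Your proposal is correct and follows essentially the same route as the paper: the paper deduces the corollary from the preceding one (c.e.\ presentations of strongly self-absorbing algebras are computably strongly self-absorbing) using the \cite{EGMM} computability result for $M_{\frak n}(\bb C)^{\st}$, Lemma \ref{cetensor}, and (implicitly) Lemma \ref{doesntmatter} and Fact \ref{cefact} to pass between the universal and tensor-product presentations. Your explicit verification that right-c.e.-ness transfers across the computable identity map is exactly the bookkeeping the paper leaves tacit.
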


Any strongly self-absorbing \cstar-algebra satisfying the UCT must be isomorphic to $M_{\frak n}(\bb C)$, $\cal O_\infty$, $M_{\frak n}(\bb C)\otimes \cal O_\infty$, $\cal Z$, or $\cal O_2$, where $\frak n$ is of infinite type and $\cal Z$ is the \textbf{Jiang-Su algebra}.  In the next section, we show that $\cal Z$ has a ``standard presentation'' that is computable, whence it is also computably strongly self-absorbing.  Consequently, with the possible exception of the UCT strongly self-absorbing algebras involving UHF algebras without computable presentations, all of the known UCT strongly self-absorbing \cstar-algebras are computably strongly self-absorbing. 

If $D$ is any strongly self-absorbing \cstar-algebra, then  $D\otimes \cal Z\cong D$ and $D\otimes \cal O_2\cong \cal O_2$.  Consequently, we have:
\begin{cor}
Suppose that $D$ is a strongly self-absorbing \cstar-algebra that admits a c.e. presentation $D^\dagger$.  Then:
\begin{enumerate}
    \item There is a computable isomorphism $D^{\dagger}\to (D\otimes \cal Z)^{\dagger\otimes \st}$ computably approximately unitarily equivalent to $\id_{D}\otimes 1_{\cal Z}$.
    \item There is a computable isomorphism $\cal O_2^{\st}\to (D\otimes \cal O_2)^{\dagger\otimes\st}$ computably approximately unitarily equivalent to $\id_{\cal O_2}\otimes 1_{D}$. 
\end{enumerate}
\end{cor}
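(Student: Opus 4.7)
The plan is to obtain both parts as direct applications of the immediately preceding corollary, which takes as hypotheses that $A$ is $D$-stable (with $D$ strongly self-absorbing), that both $A^\dagger$ and $(A\otimes D)^{\dagger\otimes\#}$ are computable presentations, and that the respective units are computable points, and concludes the existence of a computable isomorphism $A^\dagger\to(A\otimes D)^{\dagger\otimes\#}$ computably approximately unitarily equivalent to $\id_A\otimes 1_D$.

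For part (1), I would apply that corollary with $A=D$ (presented by $D^\dagger$) and with $\cal Z$ (presented by $\cal Z^{\st}$) playing the role of the strongly self-absorbing absorbing algebra.  The Jiang--Su algebra is strongly self-absorbing, and since $D$ is strongly self-absorbing it is $\cal Z$-stable (every strongly self-absorbing $D$ satisfies $D\otimes\cal Z\cong D$).  By the result of Section 5, $\cal Z^{\st}$ is a c.e.\ standard presentation and $1_\cal Z$ is one of its special points; similarly $1_D$ is a special point of the universal presentation $D^\dagger$.  Since $D$ is simple, $D^\dagger$ is computable by Fact \ref{alec}.  To verify that $(D\otimes \cal Z)^{\dagger\otimes\st}$ is computable, I combine three results already in hand:  Lemma \ref{cetensor} yields that $(D\otimes\cal Z)^{u(\dagger\otimes\st)}$ is c.e., simplicity of $D\otimes\cal Z$ together with Fact \ref{alec} upgrades this to computable, and Lemma \ref{doesntmatter} transfers the computability to $(D\otimes\cal Z)^{\dagger\otimes\st}$ via the computable identity map between the two presentations.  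All hypotheses of the previous corollary are then met, giving (1).

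For part (2), the same recipe applies with $A=\cal O_2$ (presented by $\cal O_2^{\st}$, which is c.e.\ by inspection of the Cuntz relations, hence computable since $\cal O_2$ is simple) and with $D$ (presented by $D^\dagger$) in the role of the absorbing algebra, using that $D\otimes\cal O_2\cong\cal O_2$ so $\cal O_2$ is $D$-stable.  Computability of $(\cal O_2\otimes D)^{\st\otimes\dagger}$ is verified exactly as in the previous paragraph.  The previous corollary then produces a computable isomorphism $\cal O_2^{\st}\to(\cal O_2\otimes D)^{\st\otimes\dagger}$ computably approximately unitarily equivalent to $\id_{\cal O_2}\otimes 1_D$.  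To match the form stated in (2), I post-compose with the canonical flip $(\cal O_2\otimes D)^{\st\otimes\dagger}\to (D\otimes\cal O_2)^{\dagger\otimes\st}$, $a\otimes b\mapsto b\otimes a$, which is visibly computable because it sends the $n$-th special point $a\otimes b$ of the domain to a rational point of the codomain whose code is computed from the code of the original special point via the fixed pairing functions on $\bb N^2$.

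No step in this argument is genuinely hard; the whole proof is a bookkeeping exercise assembling the previous corollary, Fact \ref{alec}, and Lemmas \ref{cetensor} and \ref{doesntmatter}.  The only mild subtlety I would take care with is the flip step in (2), which is needed because the corollary delivered by the machinery naturally produces a map into $A\otimes D$ rather than $D\otimes A$; the computability of the flip is immediate but deserves to be mentioned.
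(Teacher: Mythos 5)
Your proposal is correct and is essentially the paper's intended argument: the corollary is stated with no proof beyond ``Consequently,'' and the derivation is exactly what you give --- apply the preceding corollary with $(A,D)=(D,\mathcal{Z})$ and $(A,D)=(\mathcal{O}_2,D)$, using $D\otimes\mathcal{Z}\cong D$ and $D\otimes\mathcal{O}_2\cong\mathcal{O}_2$, simplicity plus Fact~2.2 to upgrade c.e.\ presentations to computable ones, and Lemmas~2.5 and~2.4 to handle the tensor product presentations. Your explicit treatment of the computable flip in part~(2) is a point of care that the paper's notation (which writes the codomain as $D\otimes\mathcal{O}_2$ but the reference map as $\id_{\mathcal{O}_2}\otimes 1_D$) silently elides, and it is handled correctly.
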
 

% Mention the known UCT strongly self-absoring algebras are $M_{\frak n}$, $\cal O_\infty$, $M_{frak n}\otimes \cal O_\infty$, $\cal Z$, and $\cal O_2$.  All except the ones involving the exceptional UHF algebras would be computably strongly self-absorbing.  Also, $D\otimes \cal Z\cong D$ and $D\otimes \cal O_2\cong \cal O_2$ for all strongly self-absoring $D$ and these isomorphisms would be computable.  Also, what about the exceptional UHFs; could we still have the computable strongly self-absorbing?

\section{A computable presentation of the Jiang-Su algebra}

In this section, we show how the original construction of the Jiang-Su algebra $\cal Z$ given in \cite{jiangsu} yields a ``standard'' computable presentation $\cal Z^{\st}$ of $\cal Z$, whence, by the results of the previous section, shows that $\cal Z$ is also computably strongly self-absorbing.

Given integers $p,q\geq 2$, we let $\cal Z_{p,q}$ denote the functions $f\in C([0,1],M_p(\bb C)\otimes M_q(\bb C))$ such that $f(0)\in M_p(\bb C)\otimes 1_{M_q(\bb C)}$ and $f(1)\in 1_{M_q(\bb C)}\otimes M_q(\bb C)$; $\cal Z_{p,q}$ is a \cstar-subalgebra of $C([0,1],M_p(\bb C)\otimes M_q(\bb C))$ called a \textbf{dimension drop algebra}; when $p$ and $q$ are relatively prime, $\cal Z_{p,q}$ is called a \textbf{prime} dimension drop algebra.

By \cite[Proposition 7.3]{jiangsu}, $\cal Z_{p,q}$ admits a generator-relations presentation consisting of finitely many generators and relations; as usual, we let $\cal Z_{p,q}^{\st}$ denote the associated standard presentation of $\cal Z_{p,q}$.  The exact details of the presentation are not relevant for us; the only thing we will need is the following:

\begin{lem}\label{generatorsarecomputable}
Viewing $\cal Z_{p,q}$ as a subalgebra of $C[0,1]\otimes M_p(\bb C)\otimes M_q(\bb C)$, the generators of $\cal Z_{p,q}^{\st}$ are computable points of $(C[0,1]\otimes M_p(\bb C)\otimes M_q(\bb C))^{u(\st\otimes \st\otimes \st)}$, uniformly in $p$ and $q$.
\end{lem}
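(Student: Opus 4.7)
The plan is to unpack the explicit finite generator-relation presentation of $\cal Z_{p,q}$ from \cite[Proposition 7.3]{jiangsu} and verify directly that each listed generator, viewed as a function in $C([0,1], M_p(\bb C) \otimes M_q(\bb C))$, is a computable point of $(C[0,1] \otimes M_p(\bb C) \otimes M_q(\bb C))^{u(\st \otimes \st \otimes \st)}$ with a code produced uniformly from $p$ and $q$.

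First I would reduce matters to working in $C([0,1], M_{pq}(\bb C))^{\st}$. By Lemma \ref{doesntmatter}, the tensor and universal presentations of $M_p(\bb C) \otimes M_q(\bb C)$ coincide up to a computable isomorphism, uniformly in $p$ and $q$, and the canonical identification $M_p(\bb C) \otimes M_q(\bb C) \cong M_{pq}(\bb C)$ via matrix units sends rational points to rational points uniformly. Composing with the $\phi_A$-isomorphism $C[0,1] \otimes M_{pq}(\bb C) \cong C([0,1], M_{pq}(\bb C))$ recalled before Lemma \ref{path}, it then suffices to produce codes for the generators as elements of $C([0,1], M_{pq}(\bb C))^{\st}$, uniformly in $p$ and $q$.

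Next, inspecting Jiang and Su's list of generators, each one is built from three kinds of ingredients: (i) constant functions whose values are rational matrices in $M_{pq}(\bb C)$, such as matrix units, scalar combinations thereof, and specific rational unitaries and projections implementing the endpoint conditions of the dimension drop algebra; (ii) continuous paths of unitaries of the form $u \leadsto v$, where $u$ and $v$ are rational unitaries of $M_{pq}(\bb C)$ arising from (i); and (iii) pointwise sums and products of functions of types (i) and (ii). Ingredients of type (i) are rational points of $C([0,1], M_{pq}(\bb C))^{\st}$ whose codes are effectively produced from $p$ and $q$; ingredients of type (ii) are computable points, uniformly in $p$ and the codes of the endpoints, by Lemma \ref{path}; and pointwise sums and products of computable points are computable uniformly in the input codes.

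The main obstacle is purely bookkeeping: one must match each Jiang--Su generator, whose explicit form involves partial isometries, projections, and paths chosen to enforce the dimension drop conditions at $0$ and $1$, against a formula built from ingredients (i)--(iii), and verify that all the combinatorial data entering the formula (the matrix-unit indices and, where applicable, the Bezout-type coefficients coming from $\gcd(p,q) = 1$ in the prime case) can themselves be computed uniformly from $p$ and $q$ by standard algorithms (matrix-unit enumeration and the extended Euclidean algorithm). Once this matching is carried out, combining the three observations above yields the required uniform codes, completing the proof.
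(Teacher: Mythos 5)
Your overall strategy---unpack the explicit generators from \cite[Proposition 7.3]{jiangsu} and verify each is a computable point uniformly in $p$ and $q$---is the same as the paper's. But your description of what those generators actually are is off, and as a result your ingredient list does not cover them. The generators of $\cal Z_{p,q}$ are simply the elementary tensors $a_i=(1-\iota)^{1/2}\otimes e^{(p)}_{1i}\otimes 1_q$ for $i=1,\ldots,p$ and $b_j=\iota^{1/2}\otimes 1_p\otimes e^{(q)}_{1j}$ for $j=1,\ldots,q$. There are no paths of unitaries $u\leadsto v$ here (those enter only later, in the connecting maps $\Phi_m$ of the inductive system), and no Bezout-type data from $\gcd(p,q)=1$ is needed. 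So your ingredients (ii) and the extended-Euclidean bookkeeping are irrelevant to this lemma.

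More importantly, the one genuinely nontrivial point is omitted by your decomposition: the scalar factors $\iota^{1/2}$ and $(1-\iota)^{1/2}$ are neither constant rational matrices nor finite pointwise sums and products of such, since $t\mapsto\sqrt{t}$ is not a polynomial in $\iota$. What is actually needed is that $\iota^{1/2}$ and $(1-\iota)^{1/2}$ are computable points of $C[0,1]^{\st}$, i.e., that one can effectively produce rational polynomials in $\iota$ within $2^{-k}$ of $\sqrt{t}$ (resp.\ $\sqrt{1-t}$) in sup norm on $[0,1]$. This is standard (e.g., via an explicit effective Weierstrass/Bernstein approximation of the square root), but it is an analytic verification, not bookkeeping, and your proposal's claim that the remaining work is ``purely bookkeeping'' passes over exactly this step. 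Once that fact is in hand, each generator is an elementary tensor of a computable point of $C[0,1]^{\st}$ with rational points of $M_p(\bb C)^{\st}$ and $M_q(\bb C)^{\st}$, and hence a computable point of the universal tensor product presentation, with codes manifestly uniform in $p$ and $q$; your reduction via Lemma \ref{doesntmatter} is fine but not really needed.
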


\begin{proof}
The generators of $\cal Z_{p,q}$ are $a_i:=(1-\iota)^{1/2}\otimes e^{(p)}_{1i}\otimes 1_q$, $i=1,\ldots,p$, and $b_j:=\iota^{1/2}\otimes 1_p\otimes e^{(q)}_{1j}$, $j=1,\ldots,q$.  The result follows from the fact that $\iota^{1/2}$ and $(1-\iota)^{1/2}$ are computable points of $C[0,1]^{\st}$.
\end{proof}

The Jiang-Su algebra $\cal Z$ is a particular inductive limit $\varinjlim (\cal Z_{p_m,q_m},\Phi_m)$ of  prime dimension drop algebras, which we now describe.  Begin by setting $p_0=2$ and $q_0=3$ and setting $A_0:=\cal Z_{p_0,q_0}$.  Supposing that the prime dimension drop algebra $A_m=:\cal Z_{p_m,q_m}$ has been constructed, we now construct a new dimension drop algebra $A_{m+1}:=\cal Z_{p_{m+1},q_{m+1}}$ and an injective $*$-homomorphism $\Phi_m:A_m\to A_{m+1}$.  Let $k_m$ and $l_m$ denote the first two prime numbers larger than $2p_mq_m$.\footnote{N.B. Our notation is slightly different than that in \cite{jiangsu}.}  We set $p_{m+1}:=k_mp_m$ and $q_{m+1}:=l_mq_m$.  Note that $p_{m+1}$ and $q_{m+1}$ are relatively prime, so $A_{m+1}$ is a prime dimension drop algebra.  We now construct the morphism $\Phi_m$.

Set $r_m$ to be the remainder of $k_ml_m$ modulo $q_{m+1}$ and set $s_m$ to be the remainder of $k_ml_m$ modulo $p_{m+1}$.  For $i=1,\ldots,k_ml_m$, we define functions $\xi_i\in C[0,1]$ as follows:  for $i=1,\ldots,r_m$, set $\xi_i(t)=t/2$; for $i=r_m+1,\ldots,k-s_m$, set $\xi_i(t)=1/2$; for $i=k-s_m+1,\ldots,k_ml_m$, set $\xi_i(t)=(t+1)/2$.

As noted in the proof of \cite[Proposition 2.5]{jiangsu}, both $r_mq_m$ and $k_ml_m-r_m$ are divisible by $q_{m+1}$, say $r_mq_m=\alpha_m q_{m+1}$ and $k_ml_m-r_m=\beta_m q_{m+1}$.  Let $u_m\in M_{p_{m+1}q_{m+1}}(\bb C)$ be the unitary matrix taking $$\diag(f(\xi_1(0)),\ldots,f(\xi_{k_ml_m}(0)))$$ to the block diagonal matrices with blocks $$\diag(f(0),\ldots,f(0),f(1/2),\ldots,f(1/2)),$$ where $f(0)$ is repeated $\alpha_m$ times and $f(1/2)$ is repeated $\beta_m$ times.  Moreover, $u_m$ is a rational point of $M_{p_{m+1}q_{m+1}}(\bb C)^{\st}$ for which the map $$f\mapsto u_m^*\diag(f(\xi_1(0)),\ldots,f(\xi_{k_ml_m}(0))u_m$$ is a $*$-homomorphism $A_m\to M_{p_{m+1}}\otimes 1_{M_{q_{m+1}}}$.   Similarly, there is a computable unitary $v_m$ such that $f\mapsto v_m^*\diag(f(\xi_1(1)),\ldots,f_{k_ml_m}(1))v_m$ is a $*$-homomorphism $A_m\to 1_{M_{p_{m+1}}}(\bb C)\otimes M_{q_{m+1}}(\bb C)$.

Then the map $\Phi_m(f)=(u_m\leadsto v_m)^*\diag (f\circ \xi_1,\ldots,f\circ \xi_{k_ml_m})(u_m\leadsto v_m)$ is the desired injective $*$-homomorphism $\Phi_m:A_m\to A_{m+1}$.  

\begin{prop}
The map $\Phi_m:\cal Z_{p_m,q_m}^{\st}\to \cal Z_{p_{m+1},q_{m+1}}^{\st}$ is computable, uniformly in $m$.
\end{prop}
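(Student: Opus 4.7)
The plan is to reduce the computation of $\Phi_m(p)$ to the ambient algebra $C([0,1],M_{p_{m+1}q_{m+1}}(\bb C))$, whose universal presentation $(C[0,1]\otimes M_{p_{m+1}}(\bb C)\otimes M_{q_{m+1}}(\bb C))^{u(\st\otimes\st\otimes\st)}$ is computable by iterating the proposition for tensor products with $M_n(\bb C)$.  I would first verify that every ingredient in the definition of $\Phi_m$ is uniformly computable from $m$:  the primes $k_m,l_m$ (by a standard primality search), the integers $r_m,s_m,\alpha_m,\beta_m$ (by simple arithmetic), the rational piecewise linear maps $\xi_1,\ldots,\xi_{k_ml_m}$ as computable points of $C[0,1]^{\st}$, and the rational unitaries $u_m,v_m$ of $M_{p_{m+1}q_{m+1}}(\bb C)^{\st}$.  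By Lemma~\ref{path}, the path $u_m\leadsto v_m$ is then a computable point of the ambient presentation, uniformly in $m$.

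Next, given a rational point $p$ of $\cal Z_{p_m,q_m}^{\st}$, I would compute $\Phi_m(p)$ as a computable point of the ambient.  By Lemma~\ref{generatorsarecomputable} the generators of $\cal Z_{p_m,q_m}^{\st}$ are $(1-\iota)^{1/2}\otimes e^{(p_m)}_{1i}\otimes 1$ and $\iota^{1/2}\otimes 1\otimes e^{(q_m)}_{1j}$, so each composition $p\circ\xi_i$ amounts to substituting rational affine functions into $\iota^{1/2}$ and $(1-\iota)^{1/2}$; a Taylor estimate like that in Section~2.4 makes these uniformly computable points of $C[0,1]\otimes M_{p_m}(\bb C)\otimes M_{q_m}(\bb C)$.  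The block diagonal assembly $\diag(p\circ\xi_1,\ldots,p\circ\xi_{k_ml_m})$ is a rational $*$-polynomial in the $p\circ\xi_i$ and matrix units of $M_{k_ml_m}(\bb C)$, hence computable; multiplication and adjoint against the computable point $u_m\leadsto v_m$ then yield $\Phi_m(p)$ as a computable point of the ambient, uniformly in $m$ and the code for $p$.

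Finally, to produce, given a tolerance $2^{-n}$, a rational point $q$ of $\cal Z_{p_{m+1},q_{m+1}}^{\st}$ with $\|\Phi_m(p)-q\|<2^{-n}$, I would enumerate rational $*$-polynomials in the generators of $\cal Z_{p_{m+1},q_{m+1}}^{\st}$ and, using Lemma~\ref{generatorsarecomputable} to view each candidate $q$ as a uniformly computable point of the ambient, compute the ambient norm $\|\Phi_m(p)-q\|$ to precision $2^{-(n+1)}$, returning the first candidate that certifiably fits.  Since the inclusion $\cal Z_{p_{m+1},q_{m+1}}\hookrightarrow C([0,1],M_{p_{m+1}q_{m+1}}(\bb C))$ is isometric, this ambient norm equals the norm in $\cal Z_{p_{m+1},q_{m+1}}^{\st}$, and the search halts because rational $*$-polynomials in the generators are dense in the dimension drop algebra by construction of the standard presentation, with $\Phi_m(p)\in\cal Z_{p_{m+1},q_{m+1}}$.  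The main thing to watch is uniformity: each step above must be verified to be uniform both in $m$ and in a code for $p$, but no deeper computability obstacle appears once the calculation is routed through the ambient presentation.
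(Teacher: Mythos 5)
Your proposal is correct and follows essentially the same route as the paper: reduce to computability of $\Phi_m$ as a map into the ambient presentation $(C[0,1]\otimes M_{p_{m+1}}(\bb C)\otimes M_{q_{m+1}}(\bb C))^{u(\st\otimes\st\otimes\st)}$ via Lemma~\ref{generatorsarecomputable}, then assemble $\Phi_m(p)$ from the computable data $\xi_i$, $u_m\leadsto v_m$ (Lemma~\ref{path}), and the block-diagonal map. The only difference is one of exposition: you spell out the search-and-certify step that justifies pulling the answer back to a rational point of $\cal Z_{p_{m+1},q_{m+1}}^{\st}$ (using computability of the ambient presentation and isometry of the inclusion), which the paper leaves implicit in the phrase ``it suffices to show.''
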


\begin{proof}
By Lemma \ref{generatorsarecomputable}, it suffices to show that $$\Phi_m:\cal Z_{p_m,q_m}^{\st}\to (C[0,1]\otimes M_{p_{m+1}}(\bb C)\otimes M_{q_{m+1}}(\bb C))^{u(\st\otimes \st\otimes \st)}$$ is computable uniformly in $m$.  This follows from Lemmas \ref{path} and \ref{generatorsarecomputable}, the fact that the map $f\mapsto \diag (f\circ \xi_1,\ldots,f\circ \xi_{k_ml_m})$ is a computable map $$(C[0,1]\otimes M_{p_{m}}(\bb C)\otimes M_{q_{m}}(\bb C))^{u(\st\otimes \st\otimes \st)}\to (C[0,1]\otimes M_{p_{m+1}}(\bb C)\otimes M_{q_{m+1}}(\bb C))^{u(\st\otimes \st\otimes \st)},$$ and that the unitaries $u_m$ and $v_m$ are rational points of $M_{p_{m+1}}(\bb C)^{\st}$ and $M_{q_{m+1}}(\bb C)^{\st}$ respectively, uniformly in $m$.
\end{proof}

Set $\cal Z^{\st}:=\varinjlim (\cal Z_{p_m,q_m}^{\st},\Phi_m)$, a so-called ``standard presentation'' of $\cal Z$.  Combining the above discussion with Fact \ref{alec} and Lemma \ref{indlimit}, we arrive at:

\begin{thm}
The presentation $\cal Z^{\st}$ is computable and thus computably strongly self-absorbing.
\end{thm}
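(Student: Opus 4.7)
The plan is to combine Lemma \ref{indlimit}(2), Fact \ref{cefact}, and Fact \ref{alec}, using the preceding proposition about the computability of the connecting maps. First I would verify that the sequences $(p_m)_{m\in \bb N}$ and $(q_m)_{m\in \bb N}$ are computable: they are defined recursively by $p_0=2$, $q_0=3$, $p_{m+1}=k_mp_m$, $q_{m+1}=l_mq_m$, where $k_m$ and $l_m$ are the first two primes larger than the computable quantity $2p_mq_m$, so a standard prime search yields a uniform algorithm. Since the generator-relation presentation of $\cal Z_{p,q}$ given in \cite[Proposition 7.3]{jiangsu} depends effectively and finitely on $p,q$, it follows that from $m$ one can compute a code for a finite set of rational relations whose universal \cstar-algebra is $\cal Z_{p_m,q_m}^{\st}$. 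Thus $\cal Z_{p_m,q_m}^{\st}$ is finitely c.e.\ uniformly in $m$, and so by Fact \ref{cefact} it is right-c.e.\ uniformly in $m$.

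Next I would invoke the preceding proposition, which establishes that the connecting maps $\Phi_m:\cal Z_{p_m,q_m}^{\st}\to \cal Z_{p_{m+1},q_{m+1}}^{\st}$ are computable uniformly in $m$. These two pieces of uniform information are exactly the hypotheses of Lemma \ref{indlimit}(2), which then yields that the inductive limit presentation $\cal Z^{\st}=\varinjlim (\cal Z_{p_m,q_m}^{\st},\Phi_m)$ is right-c.e.; applying Fact \ref{cefact} once more, $\cal Z^{\st}$ is c.e. Since the Jiang-Su algebra $\cal Z$ is simple, Fact \ref{alec} upgrades this to the conclusion that $\cal Z^{\st}$ is a computable presentation of $\cal Z$. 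Finally, because $\cal Z$ is strongly self-absorbing, the corollary from Section 4 stating that every c.e.\ presentation of a strongly self-absorbing \cstar-algebra is computably strongly self-absorbing gives the second half of the statement.

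The main obstacle is really just the bookkeeping of uniformity—checking that the entire construction, from the recursive prime search producing $(p_m,q_m)$, through the generator-relation presentations of the dimension drop algebras, through the connecting maps assembled via Lemmas \ref{path} and \ref{generatorsarecomputable}, can be threaded together by a single algorithm. There is no essentially new analytic content; once uniformity is in place, the conclusion is a formal consequence of the facts and lemmas already proved. Note in particular that we never need to argue that the dimension drop algebras themselves have computable (as opposed to merely c.e.) presentations; they are not simple, so Fact \ref{alec} does not apply to them, but this is unnecessary because simplicity of the limit algebra $\cal Z$ takes care of the upgrade at the last step.
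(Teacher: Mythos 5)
Your proposal is correct and follows essentially the same route as the paper: establish uniform computability of the data $(p_m,q_m)$, the presentations $\cal Z_{p_m,q_m}^{\st}$, and the connecting maps $\Phi_m$, apply Lemma \ref{indlimit}(2) and Fact \ref{cefact} to get a c.e.\ presentation of the limit, upgrade to computable via simplicity of $\cal Z$ and Fact \ref{alec}, and conclude with the corollary on c.e.\ presentations of strongly self-absorbing algebras. Your observation that one should use part (2) of Lemma \ref{indlimit} rather than part (3), precisely because the dimension drop algebras are not simple and only the limit's simplicity is exploited, is exactly the intended reading.
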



\begin{thebibliography}{99}
\bibitem{AC} P. Ara and G. Corti\~nas, \emph{Tensor products of Leavitt path algebras}, Proc. Amer. Math. Soc. \textbf{141} (2013), 2629–2639.
\bibitem{blackadar} B. Blackadar, \emph{Lifting commutation relations in Cuntz algebras}, arXiv 1501.03183.
\bibitem{EGMM} C. Eagle, I. Goldbring, T. McNicholl, and R. Miller, \emph{Computable presentations of UHF algebras}, manuscript in preparation.
\bibitem{fox} A. Fox, \emph{Computable presentations of \cstar-algebras}, to appear in the Journal of Symbolic Logic.
\bibitem{jiangsu} X. Jiang and H. Su, \emph{On a simple unital projectionless \cstar-algebra}, American J. Math. \textbf{121}
(1999), 359–413.
\bibitem{rordam} M. R\o rdam, \emph{Classification of nuclear, simple \cstar-algebras}, Encyclopaedia of Mathematical Sciences \textbf{126} (2002), Series in Operator algebras and noncommutative geometry VII, Springer-Verlag.
\bibitem{TW} A. Toms and W. Winter, \emph{Strongly self-absorbing \cstar-algebras}, Transactions of the American Mathematical Society \textbf{359} (2007), 3999–4029.
\end{thebibliography}
\end{document}